\newtheorem{theorem}{Theorem}
\newtheorem{lemma}[theorem]{Lemma}
\newtheorem{corollary}[theorem]{Corollary}
\theoremstyle{definition}
\newtheorem{definition}[theorem]{Definition}
\theoremstyle{remark}
\newtheorem{remark}[theorem]{Remark}
\numberwithin{equation}{section}
\begin{document}

\title[Centro-affine normal flows with pinched Mahler volume]
 {Convex bodies with pinched Mahler volume
under the centro-affine normal flows}
\author[M.N. Ivaki]{Mohammad N. Ivaki}
\address{Institut f\"{u}r Diskrete Mathematik und Geometrie, Technische Universit\"{a}t Wien,
Wiedner Hauptstr. 8--10, 1040 Wien, Austria}
\curraddr{}
\email{mohammad.ivaki@tuwien.ac.at}

\keywords{Geometric flows, Centro-affine normal flow, Centro-affine curvature, Blaschke-Santal\'{o} inequality, $p$-affine isoperimetric inequality}
\subjclass[2010]{Primary 53C44, 52A05; Secondary 35K55}

\begin{abstract}
We study the asymptotic behavior of smooth, origin-symmetric, strictly convex bodies under the centro-affine normal flows. By means of a stability version of the Blaschke-Santal\'{o} inequality, we obtain regularity of the solutions provided that initial convex bodies have almost maximum Mahler volume. We prove that suitably rescaled solutions converge sequentially to the unit ball in the $\mathcal{C}^{\infty}$ topology modulo $SL(n+1)$.
\end{abstract}
\maketitle
\section{Introduction}
The setting of this paper is $(n+1)$-dimensional Euclidean space, $\mathbb{R}^{n+1}.$  A compact convex subset of $\mathbb{R}^{n+1}$ with non-empty interior is called a \emph{convex body}. Write $\mathcal{F}^{n+1}$ and $\mathcal{F}^{n+1}_e$, respectively, for the set of strictly convex bodies which are smoothly embedded in $\mathbb{R}^{n+1}$ and for the set of all origin-symmetric convex bodies in $\mathcal{F}^{n+1}.$ The unit sphere is denoted by $\mathbb{S}^{n}$.

Let $K\in\mathcal{F}^{n+1}$ and $\nu:\partial K\to \mathbb{S}^{n}$ be the Gauss map of $\partial K.$ That is, at each point $x\in\partial K$, $\nu(x)$ is the unit outwards normal at $x$. Assume that $\mathcal{M}$ is an $n$-dimensional closed surface, smoothly embedded into $\mathbb{R}^{n+1}$ with a diffeomorphism $X_K$ and $X_K(\mathcal{M})=\partial K$.
The support function of $K$ as a function on the unit sphere is defined by
$$s(z):= \langle X_K(\nu^{-1}(z)), z \rangle,$$
for each $z\in\mathbb{S}^n$.

The matrix of the radii of curvature of $\partial K$ is denoted by $\mathfrak{r}=[\mathfrak{r}_{ij}]_{1\leq i,j\leq n}$ and the entries of $\mathfrak{r}$ are considered as functions on the unit sphere. They can be expressed in terms of the support function and its covariant derivatives as $\mathfrak{r}_{ij}:=\bar{\nabla}_i\bar{\nabla}_j s+s\bar{g}_{ij},$ where $[\bar{g}_{ij}]_{1\leq i,j\leq n}$ is the standard metric on $\mathbb{S}^{n}$ and $\bar{\nabla}$ is the standard Levi-Civita connection of $\mathbb{S}^{n}.$ The Gauss curvature of $\partial K$ is denoted by $\mathcal{K}$, and as a function on $\partial K$, it is also related to the support function of the convex body by \[\frac{1}{\mathcal{K}\circ\nu^{-1}}:=S_n=\det_{\bar{g}}[\bar{\nabla}_i\bar{\nabla}_js+\bar{g}_{ij}s]:=\frac{\det [\mathfrak{r}_{ij}]}{\det{[\bar{g}_{ij}]}}.\]
In the sequel, for simplicity we usually denote $\mathcal{K}\circ\nu^{-1}$ by $\mathcal{K}.$
Finally, the eigenvalues of $[\mathfrak{r}_{ij}]$ with respect to the metric $[\bar{g}_{ij}]$ are denoted by $\lambda_1\leq \lambda_2\leq\cdots\leq \lambda_n$ for $1\leq i\leq n.$
Thus, $\lambda$ is an eigenvalues of $[\mathfrak{r}_{ij}]$ with respect to the metric $[\bar{g}_{ij}]$  if and only if $\det[\mathfrak{r}_{ij}-\lambda \bar{g}_{ij}]=0.$  The principal curvatures of $\partial K$ are $\{\kappa_i(x):=\frac{1}{\lambda_i(\nu(x))}\}$ for $1\leq i\leq n$ and $x\in\partial K.$

We now proceed to describe the flow that we will study in this paper. Assume $p> 1$ is a fixed real number and let $K_0\in \mathcal{F}^{n+1}_e$. A family of convex bodies $\{K_t\}_t\subset\mathcal{F}^{n+1}_e$ given by the smooth embeddings $X:\mathcal{M}\times[0,T)\to \mathbb{R}^{n+1}$ is said to be a solution of the $p$ centro-affine normal flow, in short $p$-flow, with the initial data $X_{K_0}$, if the following evolution equation is satisfied:
\begin{equation}\label{e: p flow ev of x}
\left\{
  \begin{array}{ll}
     \partial_{t}X(x,t)=-\left(\frac{\mathcal{K}(x,t)}{\langle X(x,t), \nu(x,t)\rangle^{n+2}}\right)^{\frac{p}{p+n+1}-\frac{1}{n+2}}\mathcal{K}^{\frac{1}{n+2}}(x,t)\, \nu(x,t), & \hbox{} \\
    X(\cdot,0)=X_{K_0}.
  \end{array}
\right.
\end{equation}
In this equation, $0<T<\infty$ is the maximal time that the solution exists, and $\nu(x,t)$ is the unit normal to the hypersurface $ X(\mathcal{M},t)=\partial K_t$ at $X(x,t).$
The short time existence and uniqueness of solutions for a smooth and strictly convex initial hypersurface follow from the strict parabolicity of the equation, and it was shown in \cite{S}. As the name centro-affine suggests, solutions of the $p$ centro-affine normal flow are $SL(n+1)$ invariant while Euclidean translations of an initial convex body will lead to different solutions, since translations affect the support function of the convex body which appears in the speed of the centro-affine normal flow. It is clear from the definition of the support function that as convex bodies $\{K_t\}$ evolve by (\ref{e: p flow ev of x}) their corresponding support functions solve the following fully nonlinear equation:
 \begin{equation}\label{e: p flow ev of s}
\partial_t s(\cdot,t)=-s\left(\frac{\mathcal{K}}{s^{n+2}}\right)^{\frac{p}{p+n+1}}(\cdot,t),~~ s(\cdot ,t)=s_{K_t}(\cdot).
\end{equation}
The $p$-flow, $p>1$, was defined by Stancu in \cite{S} for the purpose of finding new global centro-affine invariants of smooth convex bodies in which a certain class of existing invariants arose naturally. Only the short time existence to the flow was then needed. Moreover, several interesting affine isoperimetric type inequalities were obtained via short time existence of the flow, \cite{S}. One also may consult \cite{S} to see an equivalent definition of the $p$-flow in terms of $SL(n+1)$ invariant quantities, e.q., in terms of a power of the centro-affine curvature, $\displaystyle\mathcal{K}/s^{n+2}$, and the centro-affine normal vector.

The long time behavior of the $p$-flow in $\mathbb{R}^2$ was studied by the author in \cite{Ivaki1,Ivaki3}. It was proved there that the area preserving $p$-flow with $p\in(1,\infty]$ evolves any convex body in $ \mathcal{F}^{n+1}_e$ to the unit disk, modulo $SL(2).$ The $p$-flow for $p=1$ is the well-known affine normal flow, which has been investigated by Sapiro and Tannenbaum \cite{ST4} for convex planar curves, by Angenent, Sapiro and Tannenbaum \cite{AST} for non-convex curves, and by Andrews \cite{BA4,BA1} in all dimensions. Andrews comprehensively studied the affine normal flow of compact, convex hypersurfaces in any dimension and showed that the volume-preserving flow evolves any convex initial bounded open set exponentially fast in the $\mathcal{C}^{\infty}$ topology to an ellipsoid. Moreover, for $n\geq 2$ existence and regularity of non-compact strictly convex solutions and ancient solutions of the affine normal flow have been investigated in \cite{LT} by Loftin and Tsui. See \cite{chen,ivaki5} for classification of compact, convex ancient solutions of the affine normal flow in $\mathbb{R}^2.$

In \cite{IS}, the author jointly with Stancu studied the asymptotic behavior of (\ref{e: p flow ev of s}) for $1\leq p<\frac{n+1}{n-1}.$ A curial ingredient there was the evolution equation of polar bodies. The polar body of $K$ with respect to the origin of $\mathbb{R}^{n+1}$, $K^{\ast}$, is the convex body defined as
\[
K^{\ast} = \{ y \in \mathbb{R}^{n+1} \mid x \cdot y \leq 1,\ \forall x
\in K \}.\]
It was proved in \cite{S} that if $\{K_t\}_{[0,T)}$ evolves by the $p$-flow, then $\{K_t^{\ast}\}_{[0,T)}$ is a solution of the
following evolution equation, the expanding $p$-flow (alternatively called the dual $p$-flow):
$$\partial_t s^{\ast}=s^{\ast}\left(\frac{\mathcal{K}^{\ast}}
{s^{\ast n+2}}\right)^{-\frac{p}{n+1+p}}.$$
This observation was the key to obtaining the regularity estimates in \cite{IS}. For a given convex body $K$, the volume of $K$, denoted by $V(K)$, is its Lebesgue measure as a subset of $\mathbb{R}^{n+1}$. In \cite{IS}, the following theorem was proved.
\begin{theorem}\cite{IS}
Let $1\leq p<\frac{n+1}{n-1}$ and $X_{K_0}$ be a smooth, strictly convex embedding of the boundary of $K_0\in\mathcal{F}^{n+1}_e.$ Then there exists a unique smooth solution $X:\mathcal{M}\times [0,T)\to\mathbb{R}^{n+1}$ of equation (\ref{e: p flow ev of x}) with initial data $X_{K_0}$. The rescaled hypersurfaces given by $\left(\frac{V(B)}{V(K_t)}\right)^{\frac{1}{n+1}}X(\mathcal{M},t)$ converge sequentially in the $\mathcal{C}^{\infty}$ topology to the unit sphere modulo $SL(n+1).$
\end{theorem}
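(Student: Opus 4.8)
The plan is to regard equation (\ref{e: p flow ev of s}) as a fully nonlinear, strictly parabolic PDE for the support function on $\mathbb{S}^n$ and to run the standard contracting-flow program: short-time existence, finite-time extinction, a monotone affine-invariant functional, uniform a priori estimates after renormalization, and extraction of a convergent subsequence whose limit is forced to extremize the $L_p$-affine isoperimetric inequality. Short-time existence and uniqueness follow from strict parabolicity of the operator $s\mapsto -s\left(\mathcal{K}/s^{n+2}\right)^{p/(n+1+p)}$, giving a unique smooth solution on a maximal interval $[0,T)$; by the $SL(n+1)$-invariance of the flow together with uniqueness, origin-symmetry of $K_0$ is preserved, so each $K_t\in\mathcal{F}^{n+1}_e$ with the origin as its centre. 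Since $\partial_t s<0$ the flow is contracting, and comparison with shrinking balls (using $V(K_t)\le V(K_0)$ to bound the circumradius) shows $T<\infty$ and $V(K_t)\to 0$ as $t\to T$, i.e.\ the bodies collapse to the origin; one therefore passes to the volume-normalized bodies $\widetilde{K}_t:=\left(V(B)/V(K_t)\right)^{1/(n+1)}K_t$.

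The core of the argument couples a monotone affine functional with the polar body. On one side, the $L_p$-affine surface area $\Omega_p(K_t)$ — which along the flow equals $-\tfrac{d}{dt}V(K_t)$ — is, after the scale-invariant normalization $\Omega_p(K_t)\,V(K_t)^{-(n+1-p)/(n+1+p)}$, monotone along the flow (by a direct computation from (\ref{e: p flow ev of s})), and the $L_p$-affine isoperimetric inequality bounds this normalized quantity above by its value on centred ellipsoids, so it converges as $t\to T$. On the other side, by \cite{S} the polar bodies $K_t^{\ast}$ solve the expanding dual flow $\partial_t s^{\ast}=s^{\ast}\left(\mathcal{K}^{\ast}/s^{\ast\,n+2}\right)^{-p/(n+1+p)}$, and the Blaschke-Santal\'{o} inequality $V(K_t)V(K_t^{\ast})\le V(B)^2$ together with the above monotonicity controls the normalized polar volume from both sides. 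Working in a time-dependent $SL(n+1)$-position for $\widetilde{K}_t$ (for instance its John position), these inputs yield uniform two-sided bounds $0<c\le\widetilde{s}(\cdot,t)\le C$ and $0<c\le\widetilde{s}^{\ast}(\cdot,t)\le C$ for the normalized support function and its polar: the upper bound on $\widetilde{s}$ from the normalized position together with the fixed volume, and the lower bound on $\widetilde{s}$ — equivalently the upper bound on $\widetilde{s}^{\ast}$ — from the controlled polar volume via the dual flow. The hypothesis $p<\tfrac{n+1}{n-1}$ enters precisely at this stage, to keep the exponents appearing in the polar-flow estimates in the range where these bounds close up.

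With the $C^0$ bounds in hand, the remainder is the usual bootstrap: the bounds on $\widetilde{s}$ and $1/\widetilde{s}$ bound the normal speed, so $\widetilde{s}$ is uniformly Lipschitz in time; a Tso-type argument together with an Andrews-style pinching estimate for centro-affine flows bounds the principal curvatures of $\widetilde{K}_t$ from above and below; and then standard fully nonlinear parabolic regularity theory (Krylov-Safonov, Evans-Krylov, parabolic Schauder) upgrades this to uniform $\mathcal{C}^{\infty}$ estimates. Arzel\`{a}-Ascoli then extracts $t_j\to T$ with $\widetilde{K}_{t_j}\to L$ in $\mathcal{C}^{\infty}$, where $L\in\mathcal{F}^{n+1}_e$ and $V(L)=V(B)$; since the normalized $\Omega_p$ has converged, $L$ attains equality in the $L_p$-affine isoperimetric inequality and is therefore a centred ellipsoid, hence the unit ball modulo $SL(n+1)$. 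Undoing the normalization yields the asserted sequential $\mathcal{C}^{\infty}$ convergence of $\left(V(B)/V(K_t)\right)^{1/(n+1)}X(\mathcal{M},t)$ to $\mathbb{S}^n$ modulo $SL(n+1)$.

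I expect the $C^0$ and curvature estimates — ruling out degeneration of $\widetilde{K}_t$ to a lower-dimensional set, which is exactly what the polar body, the dual flow and the Blaschke-Santal\'{o} inequality are deployed to prevent, and where the restriction on $p$ genuinely enters — to be the main obstacle; the regularity bootstrap above $C^0$ and the identification of the subsequential limit as a centred ellipsoid are comparatively routine.
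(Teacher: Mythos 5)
The statement you are proving is not established in this paper at all: it is quoted verbatim from \cite{IS} as background, and the present paper only proves the complementary range $p\geq\frac{n+1}{n-1}$ under the pinched Mahler volume hypothesis. So there is no ``paper's own proof'' here to compare against; what the paper does tell us is that the crucial ingredient in \cite{IS} was the evolution equation for the polar bodies, namely that $\{K_t^{\ast}\}$ solves the expanding dual $p$-flow $\partial_t s^{\ast}=s^{\ast}\bigl(\mathcal{K}^{\ast}/s^{\ast\,n+2}\bigr)^{-p/(n+1+p)}$, and that ``this observation was the key to obtaining the regularity estimates.'' Your sketch does name the dual flow, and the overall skeleton (short-time existence, finite-time extinction, monotone normalized $\Omega_p$, subsequential $\mathcal{C}^\infty$ limit, identification of the limit via the equality case of the $L_p$-affine isoperimetric inequality) is consistent with that program. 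In that sense it is broadly aligned.

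Where the proposal goes astray is in how the a priori estimates are organized. You place the weight of the argument on the $C^0$ bounds and invoke Blaschke--Santal\'o plus the monotone normalized $\Omega_p$ to ``control the normalized polar volume from both sides'' and hence pin down the shape. That is not where the difficulty lies. After a volume normalization and a time-dependent $SL(n+1)$ change of position (e.g.\ John position), the $C^0$ bounds $c\le\widetilde s\le C$ are essentially automatic from John's theorem: $B\subseteq K\subseteq\sqrt{n+1}\,B$ for an origin-symmetric body in John position, and fixing the volume only rescales by a factor between $(n+1)^{-1/2}$ and $1$. No Santal\'o or dual-flow input is needed for this, and conversely a uniformly bounded Mahler volume does \emph{not} by itself control the aspect ratio, so the inference you sketch does not close. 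The genuine obstruction — the place where both the dual flow and the restriction $p<\frac{n+1}{n-1}$ do real work — is the \emph{lower} bound on the Gauss (equivalently centro-affine) curvature of $K_t$. Upper bounds on $\mathcal{K}$ come from a Tso-type estimate (Lemma \ref{lem: upper G} is exactly of this form), but a lower bound on $\mathcal{K}$ cannot come from ``Andrews-style pinching'' for these speeds, and it is precisely here that one passes to the polar body: the expanding dual flow admits a Tso-type \emph{upper} bound on the curvature of $K_t^{\ast}$, which, via the polar relation between the centro-affine curvatures of $K_t$ and $K_t^{\ast}$, yields a lower bound on the centro-affine curvature of $K_t$. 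The exponent bookkeeping in that dual estimate is where $p<\frac{n+1}{n-1}$ (equivalently, the speed having degree $\le 1$ in the principal curvatures) enters; it has nothing to do with whether the Santal\'o bounds ``close up.'' You should also justify the final identification of the subsequential limit more carefully: convergence of the monotone quantity $\Omega_p^{n+1+p}/V^{n+1-p}$ does not by itself say the limit is an extremizer; one needs to run the flow from the limit body and use strict monotonicity off ellipsoids, as is done in Section 5 of the present paper.

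So: right cast of characters, but the roles are misassigned. Move the dual flow from the $C^0$ stage (where it is not needed) to the curvature-lower-bound stage (where it is indispensable), and drop the appeal to pinching of principal curvatures, which does not apply in this setting.
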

The Mahler volume of an origin-symmetric convex body $K$ is defined as $V(K)V(K^{\ast}),$ which is an invariant quantity under the group $GL(n+1)$. The Blaschke-Santal\'{o} inequality states that the Mahler volume is maximized only for ellipsoids centered at the origin. That is, $V(K)V(K^{\ast})\leq \omega_{n+1}^2=V(B)^2$
with equality only for the origin centered ellipsoids, \cite{Bl}.
If a convex body $K$ satisfies $V(K)V(K^{\ast})> \frac{\omega_{n+1}^2}{1+\varepsilon}$, we say its Mahler volume, $V(K)V(K^{\ast})$, is $\varepsilon$-\emph{pinched}.
\begin{theorem}\label{thm: mainthm}
Let $p\geq\frac{n+1}{n-1}$ and $X_{K_0}$ be a smooth, strictly convex embedding of the boundary of $K_0\in\mathcal{F}^{n+1}_e$. Then there exists a unique smooth solution $X:\mathcal{M}\times [0,T)\to\mathbb{R}^{n+1}$ of equation (\ref{e: p flow ev of x}) with initial data $X_{K_0}$. Moreover, there exists an $\varepsilon>0$ such that if $K_0$ satisfies $V(K_0)V(K_0^{\ast})> \frac{\omega_{n+1}^2}{1+\varepsilon}$, then the rescaled hypersurfaces given by $\frac{1}{\left(\frac{2p(n+1)}{p+n+1}(T-t)\right)^{\frac{p+n+1}{2p(n+1)}}}X(\mathcal{M},t)$ converge sequentially in the $\mathcal{C}^{\infty}$ topology to the unit sphere modulo $SL(n+1).$
\end{theorem}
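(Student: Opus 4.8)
The plan is to combine a monotonicity property of the flow with a stability version of the Blaschke-Santal\'{o} inequality, and then to run a parabolic bootstrap for a rescaled flow. Short time existence and uniqueness of a smooth, strictly convex solution follow from the strict parabolicity of (\ref{e: p flow ev of x}), as in \cite{S}; origin-symmetry and strict convexity persist because the speed depends only on $SL(n+1)$-invariant data together with the support function, and the origin remains interior to $K_t$. Comparing $\{K_t\}$ with shrinking round solutions via the maximum principle for (\ref{e: p flow ev of s}) shows that the maximal time $T$ is finite and that $V(K_t)\to0$ as $t\to T$. A round solution $\rho(t)\,\mathbb{S}^n$ obeys $\dot\rho=-\rho^{\,1-\frac{2p(n+1)}{p+n+1}}$, hence $\rho(t)=\bigl(\frac{2p(n+1)}{p+n+1}(T-t)\bigr)^{\frac{p+n+1}{2p(n+1)}}$, exactly the factor in the statement; so I pass to $\widetilde X(\cdot,t)=\rho(t)^{-1}X(\cdot,t)$, reparametrize time by $d\tau=\rho(t)^{-\frac{2p(n+1)}{p+n+1}}\,dt$ so that the rescaled flow runs for all $\tau\in[0,\infty)$, and record that its support function satisfies $\partial_\tau\widetilde s=\widetilde s\bigl(1-(\widetilde{\mathcal K}/\widetilde s^{\,n+2})^{\frac{p}{p+n+1}}\bigr)$, whose stationary solutions are the round spheres of radius $1$.

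The core point is that the Mahler volume is non-decreasing along the flow. Using $\frac{d}{dt}V(K_t)=\int_{\mathbb{S}^n}\dot s\,S_n\,d\sigma$, the evolution equations for $s$ and $s^*$ (the latter driven by the dual $p$-flow recalled above), and the polarity duality of the centro-affine curvature, $\mathcal K_{K_t^*}/s_{K_t^*}^{\,n+2}=(\mathcal K_{K_t}/s_{K_t}^{\,n+2})^{-1}$ under the correspondence of boundary points, one computes that $\frac{d}{dt}\log\bigl(V(K_t)V(K_t^*)\bigr)$ equals $n+1$ times the difference of two weighted averages of a fixed power of the centro-affine curvature of $K_t$, taken against the cone-volume measures of $K_t$ and of $K_t^*$. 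Transporting the second average to $\partial K_t$ via polarity turns this difference into a Chebyshev-type correlation inequality, so $V(K_t)V(K_t^*)$ is non-decreasing, with equality only when the centro-affine curvature is constant, i.e.\ only for origin-centered ellipsoids. (Equivalently: the $p$-affine isoperimetric inequality combined with the Blaschke-Santal\'{o} inequality.) Hence $V(K_0)V(K_0^*)>\frac{\omega_{n+1}^2}{1+\varepsilon}$ forces $V(K_t)V(K_t^*)>\frac{\omega_{n+1}^2}{1+\varepsilon}$ for every $t\in[0,T)$.

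Next I invoke a stability version of the Blaschke-Santal\'{o} inequality for origin-symmetric bodies: there is $\delta(\varepsilon)\to0$ as $\varepsilon\to0$ such that $\varepsilon$-pinched Mahler volume forces some $A_t\in SL(n+1)$ with $(1-\delta(\varepsilon))B\subseteq\mu_t A_tK_t\subseteq(1+\delta(\varepsilon))B$ for a scalar $\mu_t$. Since $A_t$ preserves volume, comparing $A_tK_t$ with the in- and circum-balls it traps (evolved to the extinction time $T$) yields $V(K_t)=(1\pm C\delta(\varepsilon))\,\omega_{n+1}\rho(t)^{n+1}$; one may then take $\mu_t$ comparable to $\rho(t)^{-1}$ and conclude that the rescaled, $SL(n+1)$-adjusted support functions obey $1-C\delta(\varepsilon)\le\widetilde s\le1+C\delta(\varepsilon)$, uniformly in $\tau$. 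With these $\mathcal{C}^0$ bounds the rescaled flow is uniformly bounded and uniformly parabolic, so for $\varepsilon$ small the maximum principle applied to the evolution of $S_n$ and of $[\mathfrak{r}_{ij}]$, in the spirit of Tso and of Andrews, yields uniform positive two-sided bounds on the principal curvatures; Krylov-Safonov and Schauder estimates then upgrade these to uniform $\mathcal{C}^k$ bounds for all $k$, uniformly in $\tau\in[0,\infty)$.

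Finally, by Arzel\`{a}-Ascoli the rescaled hypersurfaces subconverge in $\mathcal{C}^\infty$ along $\tau_j\to\infty$ to a smooth origin-symmetric strictly convex body $\widehat K$. Since $V(K_t)V(K_t^*)$ is bounded, non-decreasing and scale invariant, its derivative tends to $0$ along the subsequence, so by the equality case above the centro-affine curvature of $\widehat K$ is constant, hence $\widehat K$ is an origin-centered ellipsoid. Applying the maximum principle to $\widetilde s$ along the rescaled flow (at a spatial minimum $\widetilde{\mathcal K}/\widetilde s^{\,n+2}\le\widetilde s^{\,-2(n+1)}$, at a maximum the reverse) and using that $\min\widetilde s$ cannot blow up nor $\max\widetilde s$ decay to $0$ without contradicting extinction exactly at $T$, one gets $\min_{\mathbb{S}^n}\widetilde s(\cdot,\tau)\le1\le\max_{\mathbb{S}^n}\widetilde s(\cdot,\tau)$ for all $\tau$; as $\widehat K$ is round this pins it to the unit sphere. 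Thus the rescaled hypersurfaces converge sequentially, modulo $SL(n+1)$, to $\mathbb{S}^n$. I expect the main obstacle to be the third paragraph: converting the soft geometric conclusion of Blaschke-Santal\'{o} stability --- closeness to a ball modulo $SL(n+1)$ --- into \emph{uniform-in-time} parabolic regularity for the rescaled flow, which is where the smallness of $\varepsilon$ must genuinely be used.
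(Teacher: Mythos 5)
Your high-level architecture (short-time existence, finite extinction, Mahler volume monotonicity plus Blaschke-Santal\'{o} stability to control the shape, parabolic bootstrap, and the equality case to identify the limit) matches the paper's, and your setup of the rescaled flow and the computation $\rho(t)=\bigl(\tfrac{2p(n+1)}{p+n+1}(T-t)\bigr)^{\frac{p+n+1}{2p(n+1)}}$ are correct. But there is a genuine gap at precisely the point you flag as the main obstacle, and it is not a mere bookkeeping issue.

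The problem is the lower bound on the centro-affine curvature $\bigl(\tfrac{\mathcal K}{s^{n+2}}\bigr)^{p/(p+n+1)}$ for the rescaled flow. Tso's trick, which you invoke ``in the spirit of Tso and of Andrews,'' yields an \emph{upper} bound on the Gauss curvature once two-sided radius bounds are known; it does not produce a lower bound on the speed. For $p\geq\frac{n+1}{n-1}$ the speed has homogeneity degree $\alpha=-1+\frac{2(n+1)p}{p+n+1}>1$, and the standard Chou--Tso/Andrews maximum-principle argument for a lower curvature bound fails for this range---this is exactly why the paper restricts [IS] to $1\le p<\frac{n+1}{n-1}$ and why the present paper needs new machinery. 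Having $\mathcal{C}^0$-closeness of $A_tK_t$ to a ball is not, by itself, enough to break the circularity: the lower speed bound on the rescaled interval $[-1,0]$ depends on controlling the curvature at the starting time, which is the very thing one is trying to establish uniformly in $t^*$.

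The paper fills this gap with a Harnack estimate (Lemma \ref{lem: Harnack est}, proved by a Li--Yau/Andrews-type computation on the quantity $\mathcal{R}$ in (\ref{def: expression R})), the Smoczyk-style integrated consequence in Lemma \ref{lem: app}, and then Lemma \ref{lem: lower bound on the speed}: one fixes a time $\tau$, sandwiches $A_\tau K_t$ between shrinking balls $B_{r(t)}\subseteq A_\tau K_t\subseteq B_{R(t)}$, waits until time $\tau^*=\tau+\tfrac{r_-(A_\tau K_\tau)^{1+\alpha}}{2(1+\alpha)}$, and reads off a lower bound on the speed from the drop in the support function using Lemma \ref{lem: app}. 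The pinching assumption (\ref{e: pinching}) is used precisely here, to ensure $1-[\delta^{1+\alpha}-0.5]^{1/(1+\alpha)}>0$ so that this lower bound is strictly positive. Your proposal neither identifies nor carries out this mechanism; it replaces it with ``uniform parabolicity implies curvature bounds,'' which is not available in this homogeneity range. (A secondary, minor deviation: you use the equality case of Mahler volume to identify the limit, whereas the paper uses the monotone $p$-affine isoperimetric ratio; both are plausible routes.)
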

The restriction to origin-symmetric domains is natural in the centro-affine context. The technical difficulty in studying the $p$ centro-affine normal flows when $p>\frac{n+1}{n-1}$ is caused by the fact that for this range of $p$, $\mathcal{K}^{p},$ has homogeneity degree greater than one. The asymptotic behavior of convex hypersurfaces under geometric flows by speeds that are homogeneous functions of the principal curvatures of degree $\alpha>1$ has been a central focus of many papers. These papers are mainly divided into two categories, depending whether a pinching ratio on principal curvatures is assumed or not. In the former direction, one would like to show that solutions become spherical as they contract to points if a suitable pinching conditions on the principal curvatures of initial hypersurface is imposed. A pioneering work is \cite{Ch1}, where Chow treated flows by powers of the Gauss curvature. Other examples of such results are: powers of the mean curvature by Schulze \cite{Sch}, powers of the scalar curvature by Alessandroni and Sinestrari \cite{AS}, powers of the $m$-th mean curvature by Cabezas-Rivas and Sinestrari \cite{CRS} and a generalization of their result by Wu, Tian and Li \cite{WTL}, and flows by general functions of the principal curvatures by Andrews and McCoy \cite{BM}. In the latter category, the goal is to show without a pinching ratio on principal curvatures solutions become spherical as they contract to points. The first such results were obtained by Andrews \cite{BA2} in connection to Firey's conjecture. Other examples of such results are powers of the Gauss curvature by Andrews and Chen \cite{BX}, the squared norm of the second fundamental by Schn\"{u}rer \cite{Os}, and several more examples by Schulze and Schn\"{u}rer \cite{Sch}. These last mentioned results, in the second category, are all restricted to dimension three. Recently Guan and Ni using a new entropy functional, Chow's Harnack inequality and a beautiful trick, \emph{without} any pinching assumption, obtained the convergence of the normalized Gauss curvature flow in high dimensions \cite{GL}. In this regard, an alternative approach for obtaining a uniform lower bound on the Gauss curvature of the normalized solution is described in \cite{Ivaki4} where such a lower bound is obtained without Chow's Harnack inequality.

Continuity of the Mahler volume in the Hausdorff distance shows that a smooth convex body can have an arbitrarily large ratio $\lambda_n/\lambda_1$ while whose Mahler volume is close to the maximum value. This can be seen by cutting off negligible volumes from opposite caps of a ball and smoothing out the spherical edges. In Theorem \ref{thm: mainthm}, we do not impose any pinching condition on the principal curvatures of the initial convex body; we assume that the initial smooth, origin-symmetric convex body has $\varepsilon$-pinched Mahler volume for $\varepsilon>0$  small enough, to be determined later. Therefore, a weaker pinching condition is imposed compared to the conventional pinching condition. Additionally, we point out, as it will be shown in Corollary \ref{cor: pinching ratio}, that preservation of the pinching along the $p$-flow is an immediate corollary of the monotonicity of the Mahler volume under the $p$-flow.

The paper is structured as follows: Section 2 focuses on establishing basic properties of the $p$-flow. In Section 3, using a stability version of the Blaschke-Santal\'{o} inequality \cite{BB2}, and the monotonicity of the Mahler volume \cite{S} we obtain an estimate on the isoperimetric ratio, modulo $SL(n+1)$. In Section 4, we obtain a Harnack inequality for the $p$-flow which is the major result of the paper. We then proceed to obtain the regularity of solutions. Establishing a uniform upper bound on the speed of the flow is fairly easy. To obtain a uniform lower bound on the speed, we modify Andrews-McCoy's argument presented in \cite[Section 12]{BM}. In Section 5, we prove Theorem \ref{thm: mainthm}.
\section{Basic properties of the $p$-flow}
Given a convex body $K$, the inradius of $K$, $r_-(K)$, is the radius of the largest ball inscribed in $K$. The circumradius of $K$, $r_+(K)$, is the radius of the smallest ball containing $K$. For origin-symmetric convex bodies, the smallest and the largest balls as above are centered at the origin.
\begin{lemma} \label{lem: lower G1} Flow (\ref{e: p flow ev of s}) increases in time
$\min\limits_{z\in\mathbb{S}^{n}} \left(s\left(\frac{\mathcal{K}}{s^{n+2}}\right)^{\frac{p}{p+n+1}}\right)(z,t).$
\end{lemma}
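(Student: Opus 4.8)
The plan is to differentiate the speed along the flow and apply the parabolic maximum principle at a spatial minimum. Write $G:=s\left(\frac{\mathcal{K}}{s^{n+2}}\right)^{\frac{p}{p+n+1}}$, which is precisely $-\partial_t s$ by equation (\ref{e: p flow ev of s}). Since $G$ is (up to sign) the normal speed of the flow, it satisfies an evolution equation of the general form $\partial_t G = L G + (\text{zeroth-order terms})\cdot G$, where $L$ is a second-order linear elliptic operator of the type $a^{ij}\bar\nabla_i\bar\nabla_j$ with $[a^{ij}]$ positive definite (arising from linearizing the fully nonlinear right-hand side in the entries $\mathfrak{r}_{ij}=\bar\nabla_i\bar\nabla_j s+s\bar g_{ij}$). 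The point of the lemma is to show the zeroth-order coefficient has a favourable sign at a minimum of $G$.

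First I would carry out the linearization explicitly. Starting from $\partial_t s=-G(s,\mathfrak{r})$ with $G=s^{1-\frac{p(n+2)}{p+n+1}}\mathcal{K}^{\frac{p}{p+n+1}}$ and $\mathcal{K}=\det[\bar g_{ij}]/\det[\mathfrak{r}_{ij}]$, one computes
\[
\partial_t G=\frac{\partial G}{\partial \mathfrak{r}_{ij}}\,\partial_t\mathfrak{r}_{ij}+\frac{\partial G}{\partial s}\,\partial_t s
=\frac{\partial G}{\partial \mathfrak{r}_{ij}}\left(\bar\nabla_i\bar\nabla_j(\partial_t s)+(\partial_t s)\bar g_{ij}\right)+\frac{\partial G}{\partial s}\,\partial_t s.
\]
Using $\partial_t s=-G$ and $\frac{\partial G}{\partial \mathfrak{r}_{ij}}=-\frac{p}{p+n+1}G\,\mathfrak{r}^{ij}$ (where $[\mathfrak{r}^{ij}]$ is the inverse of $[\mathfrak{r}_{ij}]$), this becomes
\[
\partial_t G=\frac{p}{p+n+1}G\,\mathfrak{r}^{ij}\bar\nabla_i\bar\nabla_j G+\frac{p}{p+n+1}G\,\mathfrak{r}^{ij}\bar g_{ij}\,G-\frac{\partial G}{\partial s}\,G.
\]
So with $L:=\frac{p}{p+n+1}G\,\mathfrak{r}^{ij}\bar\nabla_i\bar\nabla_j$ one has $\partial_t G=LG+\left(\frac{p}{p+n+1}\,\mathfrak{r}^{ij}\bar g_{ij}\,G-\frac{\partial G}{\partial s}\right)G$, and since $\frac{\partial G}{\partial s}=\left(1-\frac{p(n+2)}{p+n+1}\right)\frac{G}{s}$, the zeroth-order coefficient is $\frac{p}{p+n+1}\,\mathfrak{r}^{ij}\bar g_{ij}\,G-\left(1-\frac{p(n+2)}{p+n+1}\right)\frac{G}{s}$.

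Next I would evaluate at a point $z_0$ where $G(\cdot,t)$ attains its spatial minimum $G_{\min}(t)$. There $\bar\nabla G=0$ and $\bar\nabla^2 G\geq 0$, so the term $LG=\frac{p}{p+n+1}G\,\mathfrak{r}^{ij}\bar\nabla_i\bar\nabla_j G\geq 0$ because $[\mathfrak{r}^{ij}]$ is positive definite (the body is strictly convex) and $G>0$. It remains to check that the zeroth-order coefficient is nonnegative at $z_0$. Here $\mathfrak{r}^{ij}\bar g_{ij}=\sum_i \lambda_i^{-1}=\sum_i\kappa_i>0$, and the sign of the remaining piece is controlled by the sign of $1-\frac{p(n+2)}{p+n+1}$; for $p>1$ this quantity is negative precisely when $p(n+2)>p+n+1$, i.e. $p(n+1)>n+1$, i.e. $p>1$, so $-\left(1-\frac{p(n+2)}{p+n+1}\right)>0$ and hence the whole zeroth-order coefficient is strictly positive at the minimum. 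Therefore $\partial_t G_{\min}(t)\geq 0$ in the barrier/support sense, which gives $\frac{d}{dt}G_{\min}(t)\geq 0$ and proves the lemma. I would phrase this last step using a standard lemma on the time-derivative of the minimum of a smooth function (e.g. Hamilton's trick) rather than assuming $G_{\min}$ is differentiable.

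The only mildly delicate point — the step I would be most careful about — is the sign bookkeeping in the zeroth-order term: one must track the exponent $1-\frac{p(n+2)}{p+n+1}$ correctly and use $p>1$ to conclude its negativity, and one must be sure the $\mathfrak{r}^{ij}\bar g_{ij}$ term genuinely appears with a $+$ sign after the linearization of the $s\bar g_{ij}$ part of $\mathfrak{r}_{ij}$. Everything else is the routine maximum-principle argument, and strict convexity of $K_t$ (preserved on $[0,T)$) guarantees $[\mathfrak{r}^{ij}]>0$ throughout, so $L$ is genuinely elliptic and the argument applies for all $t\in[0,T)$.
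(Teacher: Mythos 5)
Your proposal is correct and takes essentially the same route as the paper: you compute the evolution equation of the speed $G=s^{\alpha}S_n^{\beta}$ (writing the elliptic coefficient as $\tfrac{p}{p+n+1}G\,\mathfrak r^{ij}$, which is the paper's $-\beta s^{\alpha}S_n^{\beta-1}(\dot S_n)^{ij}$ via $(\dot S_n)^{ij}=S_n\mathfrak r^{ij}$), and then observe at a spatial minimum that the second-order term is nonnegative and the zeroth-order terms are nonnegative because $\alpha=1-\tfrac{p(n+2)}{p+n+1}\le 0$ and $\tfrac{p}{p+n+1}\,\mathfrak r^{ij}\bar g_{ij}>0$. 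This is precisely the paper's argument, just with the chain rule spelled out in terms of $\mathfrak r^{ij}$ rather than stated directly in terms of $(\dot S_n)^{ij}$.
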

\begin{proof} We compute the evolution equation of the speed. Let $\alpha:=1-\frac{(n+2)p}{p+n+1}$ and $\beta:=-\frac{p}{p+n+1}.$
\begin{align*}
\partial_t \left(s^{\alpha}S_n^{\beta}\right)=-\beta s^{\alpha}S_n^{\beta-1}(\dot{S}_n)^{ij}[\bar{\nabla}_i\bar{\nabla}_j\left(s^{\alpha}S_n^{\beta}\right)+\bar{g}_{ij}\left(s^{\alpha}S_n^{\beta}\right)]-\alpha s^{2\alpha-1}S_{2\beta},
\end{align*}
where $(\dot{S}_n)^{ij} := \frac{\partial S_n}{\partial  \mathfrak{r}_{ij}}$ is the derivative of $S_n$ with respect to the entry $\mathfrak{r}_{ij}$ of the radii of curvature matrix. Since $\beta$ and $\alpha$ are both non-positive, the claim follows from the maximum principle.
\end{proof}
\begin{lemma}\label{lem: upper G1}
For any smooth, strictly convex solution $\{K_t\}_{[0,t_1]}$ of (\ref{e: p flow ev of s}) with $0<R_{-}\leq r_{-}(K_t)\leq r_{+}(K_t)\leq R_{+} < +\infty$ we have
\[\mathcal{K}(z,t)\leq C(n,R_+,R_-),\]
where $C$ is a constant depending on $n,R_{-},R_{+}$ and $\max_{z\in\mathbb{S}^n}\mathcal{K}(z,0).$
\end{lemma}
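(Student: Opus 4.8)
The plan is to derive an evolution inequality for an auxiliary quantity that controls the Gauss curvature and forbids it from blowing up, using the geometric bounds $R_-\le r_-(K_t)\le r_+(K_t)\le R_+$ together with the maximum principle. Concretely, since $S_n=\det_{\bar g}[\mathfrak r_{ij}]$ and the $p$-flow for the support function is $\partial_t s=-s(S_n/s^{n+2})^{p/(p+n+1)}$, I would work with the speed function $G:=s\,(S_n/s^{n+2})^{p/(p+n+1)}$, which by Lemma \ref{lem: lower G1} has a positive, time-increasing infimum, so $G$ is bounded below by a constant depending only on the initial data and $R_\pm$. Because the inradius/circumradius bounds give $R_-\le s(z,t)\le R_+$ for all $z\in\mathbb S^n$ (origin-symmetry places the optimal balls at the origin), bounding $\mathcal K = 1/S_n$ from above is equivalent to bounding $G$ from above.

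The key step is therefore a uniform upper bound on $G$, or equivalently on $S_n^{-1}$. First I would compute $\partial_t G$; from the computation already recorded in the proof of Lemma \ref{lem: lower G1}, with $\alpha=1-\tfrac{(n+2)p}{p+n+1}$ and $\beta=-\tfrac{p}{p+n+1}$ one has
\[
\partial_t G=-\beta\, s^{\alpha}S_n^{\beta-1}(\dot S_n)^{ij}\bigl[\bar\nabla_i\bar\nabla_j G+\bar g_{ij}G\bigr]-\alpha\, s^{2\alpha-1}S_n^{2\beta}.
\]
Since $\alpha<0$ (this is exactly the range $p\ge\frac{n+1}{n-1}$ where $\alpha\le 0$; more precisely $\alpha<0$ for $p>\frac{n+1}{n-1}$ and one treats the boundary case by the same inequality), the zeroth-order term $-\alpha s^{2\alpha-1}S_n^{2\beta}$ is nonnegative and must be absorbed. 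The natural device is to consider instead a ratio such as $Q:=G/(s-\varepsilon_0)$ for a small $\varepsilon_0<R_-$, or more in the spirit of Tso's trick, $Q:=S_n^{\beta}/(s-\varepsilon_0)$, run the maximum principle at an interior spatial maximum of $Q$, and use that at such a point the first covariant derivatives vanish and the Hessian is nonpositive; the bad zeroth-order term then gets controlled against a term coming from differentiating the denominator, which is quantitatively positive because $s-\varepsilon_0\ge R_--\varepsilon_0>0$. This yields $\partial_t Q_{\max}\le C_1-C_2 Q_{\max}^{\gamma}$ for suitable positive constants and exponent $\gamma>1$ depending on $n,p,R_\pm$, whence $Q$, and therefore $\mathcal K$, stays bounded by a constant depending only on $n,R_-,R_+$ and $\max_{\mathbb S^n}\mathcal K(\cdot,0)$.

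The main obstacle I anticipate is bookkeeping the sign and the precise form of the gradient terms produced when one passes from $G$ (or $S_n^\beta$) to the ratio $Q$: the term $(\dot S_n)^{ij}\bar\nabla_i\bar\nabla_j Q$ expands into a piece involving $\bar\nabla_i\bar\nabla_j(s-\varepsilon_0)=\mathfrak r_{ij}-(s-\varepsilon_0)\bar g_{ij}-\varepsilon_0\bar g_{ij}+\cdots$, and one must check that the resulting algebraic combination, evaluated where $\bar\nabla Q=0$, has the right sign to dominate $-\alpha s^{2\alpha-1}S_n^{2\beta}$. Here one uses $(\dot S_n)^{ij}\mathfrak r_{ij}=nS_n$ (homogeneity of the determinant) and $(\dot S_n)^{ij}\bar g_{ij}>0$ (positivity of the cofactor matrix on the positive cone), which are exactly the structural facts that make the argument close; the constants then depend only on $n$, the bounds $R_\pm$ on $s$, and the initial maximum of $\mathcal K$ needed to start the ODE comparison. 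An alternative, which I would fall back on if the ratio estimate gets unwieldy, is to bound $G$ directly along characteristics by comparison with the ODE $\dot y = -\alpha\,(R_\pm)^{\cdots}\,y^{2}$-type obtained after noting $S_n^{2\beta}=(G/s)^{2}s^{-2\alpha}\le C\,G^{2}$, combined with the lower bound on $G$ from Lemma \ref{lem: lower G1}; either way the conclusion is the stated estimate $\mathcal K(z,t)\le C(n,R_+,R_-)$.
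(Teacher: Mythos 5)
Your main plan is exactly the paper's: define $\Psi = \frac{s^{\alpha}S_n^{\beta}}{s - R_-/2}$ (the paper takes $\varepsilon_0 = R_-/2$), apply Tso's trick at a space-time maximum, use $(\dot S_n)^{ij}\mathfrak{r}_{ij}=nS_n$ and $(\dot S_n)^{ij}\bar g_{ij}>0$ to turn the bad zeroth-order term into an expression of the form $\Psi^2\bigl(-n\beta-\alpha+1+\tfrac{\beta R_-}{2}\mathcal{H}\bigr)$, and conclude that $\mathcal{H}$ (hence $\mathcal{K}$) is bounded where $\partial_t\Psi\ge 0$, treating the $t=0$ case separately. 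Two small remarks: the sign condition $\alpha<0$ holds for every $p>1$, not only for $p\ge\frac{n+1}{n-1}$, so there is no special range to worry about; and your ``fallback'' of comparing $G$ itself against the ODE $\dot y \le C\,y^2$ cannot work, since that ODE blows up in finite time and gives no uniform bound on $[0,t_1]$ --- the denominator $s-\varepsilon_0$ in the test function is essential, not a convenience, because it is what converts the quadratic nonlinearity into a coercive $\mathcal{H}$-term with the favorable sign.
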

\begin{proof}
We apply a standard technique due to Tso \cite{Tso}. For simplicity, we may set $\alpha:=1-\frac{(n+2)p}{p+n+1}$ and $\beta:=-\frac{p}{p+n+1}.$ Employing the maximum principle, we will prove that $\Psi(z,t)$ defined by
\[\Psi:=\frac{s^{\alpha}S_n^{\beta}}{s-R_{-}/2}\]
is a bounded function from above with a bound only depending on $n, p, R_{-}, R_{+},$ and $\max\limits_{\mathbb{S}^n}\Psi(z,0).$
At the point where the maximum of $\Psi$ is realized, we get
\[0=\bar{\nabla}_i\Psi=\bar{\nabla}_i \left(\frac{s^{\alpha}S_n^{\beta}}{s-R_{-}/2}\right) \hbox{~and~}  \bar{\nabla}_i\bar{\nabla}_j \Psi\leq 0.\]
Thus, we obtain
$\frac{\bar{\nabla}_i (s^\alpha S_n^\beta)}{s-R_-/2}=\frac{(s^\alpha S_n^\beta) \bar{\nabla}_i s}{(s-R_-/2)^2},$
and consequently
\begin{equation}\label{e: tso}
\bar{\nabla}_i\bar{\nabla}_j\left(s^{\alpha}S_n^{\beta}\right)+\bar{g}_{ij}\left(s^{\alpha}S_n^{\beta}\right)\leq
\frac{s^{\alpha}S_n^{\beta}\mathfrak{r}_{ij}-(R_{-}/2)s^{\alpha}S_n^{\beta}\bar{g}_{ij}}{s-R_{-}/2}.
\end{equation}
To apply the parabolic maximum principle, we calculate the time derivative of $\Psi:$
\begin{align*}
\partial_t\Psi=&-\frac{\beta s^{\alpha}S_n^{\beta-1}}{s-R_{-}/2}(\dot{S}_n)^{ij}
\left[\bar{\nabla}_i\bar{\nabla}_j\left(s^{\alpha}S_n^{\beta}\right)+\bar{g}_{ij}\left(s^{\alpha}S_n^{\beta}\right)\right]\\
&+\frac{S_n^{\beta}}{s-R_{-}/2}\partial_t s^{\alpha}+\frac{s^{2\alpha}S_n^{2\beta}}{(s-R_{-}/2)^2}.
\end{align*}
Also notice that
\begin{equation}\label{e: one to last}
\frac{S_n^{\beta}}{s-R_{-}/2}\partial_t s^{\alpha}=-\alpha\Psi^2+\frac{\alpha R_{-}}{2}\frac{s^{2\alpha-1}S_n^{2\beta}}{(s-R_-/2)^2}\leq -\alpha\Psi^2.
\end{equation}
Hence, using inequalities (\ref{e: tso}) and (\ref{e: one to last}) we infer that at the point where the maximum of $\Psi$ is reached we have
\begin{equation}\label{e: last step tso}
\partial_t\Psi\leq\Psi^2\left(-n\beta-\alpha+1+\frac{\beta R_{-}}{2}\mathcal{H}\right),
\end{equation}
where the symbol $\mathcal{H}=\sum\limits_i \kappa_i$ stands for the mean curvature. We consider two cases. First, we may assume that the maximum of $\Psi$ is achieved at a time $t>0$. In this case, we have $\partial_t\Psi\geq0.$
So inequality (\ref{e: last step tso}) implies that
\[\mathcal{K}\leq \left(\frac{\mathcal{H}}{n}\right)^{n}\leq \left(\frac{4(n+1)}{nR_{-}}\right)^n\]
at the point where the maximum of $\Psi$ is reached. This in turn implies that $\mathcal{K}(z,t)\leq C_1(n,R_+,R_-).$
Second, the maximum of $\Psi$ may occur at $t=0$, we then have
\[\frac{s\left(\frac{\mathcal{K}}{s^{n+2}}\right)^{\frac{p}{p+n+1}}}{s-R_-/2}(z,t)\leq \frac{s\left(\frac{\mathcal{K}}{s^{n+2}}\right)^{\frac{p}{p+n+1}}}{s-R_-/2}(z,0)\leq C(R_+,R_-)\mathcal{K}(z,0).\]
Thus $\mathcal{K}(z,t)\leq C_2(R_+,R_-)\mathcal{K}(z,0).$ Taking $C=C_1+C_2$ completes the proof.
\end{proof}
\begin{remark}\label{rem: rem}
Notice that if the Gauss curvature is bounded from above, then a lower bound on the principal curvatures implies an upper bound on the principal curvatures.
\end{remark}
\begin{lemma}[Lower bound on the principal curvatures]\cite{IS}\label{lem: lower P}
Let $\{K_t\}_{[0,t_1]}$ be a smooth strictly convex solution  of (\ref{e: p flow ev of s}) with $0<R_{-}\leq r_{-}(K_t)\leq r_{+}(K_t)\leq R_{+}< +\infty$ and suppose $C_1\leq S_n\leq C_2$ for all $t\in[0,t_1].$
Then there exist constants $C$ and $C'$ depending on $n,p,R_{-}$, $R_{+},C_1$ and $C_2$ such that
\[\kappa_i(\cdot,t)\geq \frac{1}{C+C't^{-(n-1)}},~\forall t\in[0,t_1].\]
\end{lemma}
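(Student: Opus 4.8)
The plan is to reduce the statement to a single scalar estimate and prove that by a Tso-type maximum principle applied to the evolving radii of curvature. Write $\lambda_1\le\cdots\le\lambda_n$ for the eigenvalues of $[\mathfrak r_{ij}]$, so that $\kappa_1\ge\cdots\ge\kappa_n$ are the principal curvatures and $\kappa_n=1/\lambda_n$ is the smallest. Since $S_n=\lambda_1\cdots\lambda_n\le C_2$ on $[0,t_1]$, we get $\lambda_n\le S_n\lambda_1^{-(n-1)}\le C_2\lambda_1^{-(n-1)}$, hence $\kappa_n\ge C_2^{-1}\lambda_1^{\,n-1}$; as $\kappa_i\ge\kappa_n$ for every $i$, it suffices to prove a lower bound of the shape $\lambda_1(\cdot,t)\ge c\min\{t,1\}$ with $c=c(n,p,R_-,R_+,C_1,C_2)>0$, for substituting this and absorbing constants then gives exactly $\kappa_i\ge(C+C't^{-(n-1)})^{-1}$. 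Equivalently, one has to bound the largest principal curvature $\kappa_1=1/\lambda_1$ by $C+C't^{-1}$, the familiar Tso-type $1/t$ decay.

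For the scalar estimate I would start from the evolution $\partial_t\mathfrak r_{ij}=-\bar\nabla_i\bar\nabla_jF-F\bar g_{ij}$, with $F:=s^{\alpha}S_n^{\beta}$, $\alpha=1-\frac{(n+2)p}{p+n+1}\le0$, $\beta=-\frac{p}{p+n+1}<0$, obtained by differentiating $\mathfrak r_{ij}=\bar\nabla_i\bar\nabla_js+s\bar g_{ij}$ and using (\ref{e: p flow ev of s}). Its linearization $\mathcal L:=\dot F^{kl}\bar\nabla_k\bar\nabla_l$, $\dot F^{kl}:=\beta s^{\alpha}S_n^{\beta-1}(\dot S_n)^{kl}$, is elliptic since $[\mathfrak r_{ij}]>0$, and the hypotheses $R_-\le s\le R_+$, $C_1\le S_n\le C_2$ (which also force $\lambda_1\le C_2^{1/n}$) keep the scalar factor $s^{\alpha}S_n^{\beta-1}$ trapped between two positive constants depending only on the data; the decisive feature is that $(\dot S_n)^{11}=S_n/\lambda_1$ is large exactly when $\lambda_1$ is small, i.e. the flow diffuses most strongly precisely where a principal curvature wants to blow up. I would then run the maximum principle on $\lambda_1(z,t)=\min_{|\xi|_{\bar g}=1}\mathfrak r_{ij}\xi^i\xi^j$ after a Tso-type normalization by a power of the support function (to soak up the gradient terms, just as $\Psi$ does in Lemma \ref{lem: upper G1}): freezing $\xi$ at the minimizing eigenvector, discarding the gradient and second-variation terms that have a favorable sign, and commuting covariant derivatives via the Codazzi symmetry of $\bar\nabla\mathfrak r$ and the constant-curvature Simons identity on $\mathbb S^n$ to rewrite $\dot F^{kl}\bar\nabla_1\bar\nabla_1\mathfrak r_{kl}$ as $\mathcal L\lambda_1$ plus zeroth-order terms polynomial in $\mathfrak r$ and $\bar g$. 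The target is a differential inequality $\partial_t\lambda_1\ge\mathcal L\lambda_1+a_0-C_3\lambda_1$ at the spatial minimum whenever $\lambda_1\le1$, with $a_0>0$ and $C_3$ depending only on the data; comparing $\lambda_1$ with the solution of $\varphi'=a_0-C_3\varphi$, $\varphi(0)=0$, by the parabolic minimum principle then yields $\lambda_1\ge c\min\{t,1\}$ on $[0,t_1]$, and the reduction finishes the proof.

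The main obstacle is the sign bookkeeping in that last step. After the Simons commutations one confronts a sum of competing zeroth-order reaction terms — the bad $-F$, a term $\frac{\alpha F}{s}(\lambda_1-s)$ from $F_s\bar\nabla_1\bar\nabla_1s$, trace terms such as $\dot F^{kl}\mathfrak r_{kl}=n\beta F$ and $\dot F^{kl}\bar g_{kl}=\beta F\sum_i\lambda_i^{-1}$, and a favorable contribution carrying the large factor from the $\lambda_1$-direction of $\dot F^{kl}$ (equivalently the $S_n/\lambda_1$ blow-up) — and one must check that, once $\lambda_1$ is small, these align so that a genuinely positive constant $a_0$ is left over; this is exactly where the two geometric bounds $R_-\le s\le R_+$ and $C_1\le S_n\le C_2$ do their work, making every coefficient comparable to a constant. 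Should the direct computation prove unwieldy, an alternative is to pass to the polar bodies $K_t^{\ast}$, which by \cite{S} solve the expanding flow $\partial_t s^{\ast}=s^{\ast}(\mathcal K^{\ast}/s^{\ast\,n+2})^{-p/(n+1+p)}$: for an expanding flow an upper bound on the largest principal curvature is the comparatively easy direction (a Tso-type argument in the spirit of Lemma \ref{lem: upper G1}), and pulling that estimate back through the standard curvature relations between a convex body and its polar gives the same bound for $K_t$.
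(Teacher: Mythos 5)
The paper does not supply a proof of this lemma; it is cited verbatim from \cite{IS} (Ivaki--Stancu). The introduction explicitly highlights that the key ingredient of \cite{IS} for the regularity estimates was the evolution of the \emph{polar} bodies under the dual expanding flow, which is the route you relegate to a one-sentence alternative. Your primary route — a direct tensor maximum principle for $\lambda_1$ — is therefore genuinely different from the proof you are being compared against, so the question is whether it actually closes.

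Your reduction is fine: $S_n\le C_2$ and $\lambda_1\le\cdots\le\lambda_n$ give $\lambda_n\le C_2\lambda_1^{-(n-1)}$, hence $\kappa_i\ge\kappa_n\ge\lambda_1^{n-1}/C_2$, and the target $\lambda_1\ge c\min\{t,1\}$ reproduces the stated $t^{-(n-1)}$ form. That part is clean.

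The gap is exactly where you flag it, and it is a genuine one, not merely unfinished bookkeeping. Carrying out the commutation you describe, with $T=\mathfrak r$, $\mathcal A^{kl}:=-F_{S_n}(\dot S_n)^{kl}=|\beta|s^\alpha S_n^{\beta-1}(\dot S_n)^{kl}$ and the constant-curvature Simons identity, the zeroth-order part of $\partial_t\mathfrak r_{ij}$ contracted against the minimizing unit vector $\xi$ is
\[
\pm|\beta|F\Bigl[\bigl(\textstyle\sum_m\lambda_m^{-1}\bigr)\lambda_1 - n\Bigr]
-\frac{\alpha F}{s}(\lambda_1-s)-F ,
\]
up to the sign convention in the Simons commutator and before the normalization and gradient pieces. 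The point is that $(\dot S_n)^{11}=S_n/\lambda_1$ enters only through $\mathcal A^{kl}\bar g_{kl}=|\beta|F\sum_m\lambda_m^{-1}$, which in the reaction is always multiplied by $\lambda_1$, so $\bigl(\sum_m\lambda_m^{-1}\bigr)\lambda_1\in[1,n]$ stays bounded and the hoped-for blow-up \emph{cancels}. As $\lambda_1\to0$ with $s$, $S_n$ controlled, the whole expression tends to a strictly \emph{negative} constant multiple of $F$, not to a positive $a_0$; the only manifestly favorable term $|\alpha|F\lambda_1/s$ vanishes in that limit, and a Tso-type factor $(s-R_-/2)^{-1}$ or $(2R_+-s)^{-1}$ only contributes terms proportional to $\lambda_1$ as well. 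So the differential inequality $\partial_t\lambda_1\ge\mathcal L\lambda_1+a_0-C_3\lambda_1$ with $a_0>0$ depending only on $n,p,R_\pm,C_1,C_2$ is not produced by the terms you point to; you would need an additional mechanism (for instance, exploiting the quadratic gradient terms together with concavity of $S_n^{1/n}$ in a non-obvious way), and you have not exhibited one. Minor but related: with your definition $\dot F^{kl}=\beta s^\alpha S_n^{\beta-1}(\dot S_n)^{kl}$ and $\beta<0$, $\dot F^{kl}$ is \emph{negative} definite, so $\mathcal L=\dot F^{kl}\bar\nabla_k\bar\nabla_l$ has the wrong sign for a minimum principle; the parabolic operator you want is $\partial_t+\dot F^{kl}\bar\nabla_k\bar\nabla_l$, i.e.\ $\mathcal L$ should be $-\dot F^{kl}\bar\nabla_k\bar\nabla_l$.

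The polar route you mention is the one the paper credits \cite{IS} with, and it neatly sidesteps the sign problem: the polar bodies solve an expanding flow whose speed carries a \emph{negative} power of curvature, and there a Tso-type barrier bounds the appropriate curvature quantity directly, which then pulls back through the duality between $K$ and $K^\ast$ to a lower bound on $\kappa_i(K_t)$. But in your write-up this is a single sentence with the duality relation between the principal curvatures of $K$ and $K^\ast$ left unstated, and that relation (not merely the Gauss-curvature relation) is precisely what makes the power $t^{-(n-1)}$ come out. As it stands, neither route is carried to completion, and the one you develop in detail runs into a sign obstruction at the heart of the argument.
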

\begin{theorem}\label{theorem: zero volume}
The solution of (\ref{e: p flow ev of x}) exists on a maximal finite time interval $[0,T)$ and $\lim\limits_{t\to T}V(K_t)=0.$
\end{theorem}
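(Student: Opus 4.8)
The plan is to establish the two assertions in turn: that the maximal time $T$ is finite, and that $V(K_t)\to 0$ as $t\to T$. Existence and uniqueness of a smooth, strictly convex solution on a maximal interval $[0,T)$ follow from strict parabolicity of (\ref{e: p flow ev of s}), as in \cite{S}. For finiteness, put $f_0:=\min_{z\in\mathbb{S}^n}\big(s(\mathcal{K}/s^{n+2})^{p/(p+n+1)}\big)(z,0)$, a strictly positive constant since $K_0$ is smooth, strictly convex and has the origin in its interior. By Lemma \ref{lem: lower G1} the speed remains $\ge f_0$, so (\ref{e: p flow ev of s}) gives $\partial_t s(z,t)\le -f_0$, whence $s(z,t)\le s(z,0)-f_0t$ for every $z$. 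Since $K_t$ is origin-symmetric the origin stays interior and $s(z,t)>0$ for $t<T$; taking $z$ with $s(z,0)=r_-(K_0)$ forces $T\le r_-(K_0)/f_0<\infty$.

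For the second assertion, observe that the flow moves $\partial K_t$ inward, so the bodies $K_t$ are nested and $V(K_t)$ is non-increasing (equivalently $\frac{d}{dt}V(K_t)=-\int_{\mathbb{S}^n}s^{\alpha}S_n^{1+\beta}\,dz<0$, with $\alpha=1-\frac{(n+2)p}{p+n+1}<0$ and $1+\beta=\frac{n+1}{p+n+1}>0$). Suppose, for contradiction, that $V(K_t)\to V_\infty>0$. Then $r_+(K_t)\le r_+(K_0)=:R_+$; and if $z_0$ achieves $\min_z s_{K_t}=r_-(K_t)$, then origin-symmetry puts $K_t$ inside the slab $\{|\langle x,z_0\rangle|\le r_-(K_t)\}$, so $V_\infty\le V(K_t)\le c(n)R_+^{\,n}r_-(K_t)$ and thus $r_-(K_t)\ge R_-:=V_\infty/(c(n)R_+^{\,n})>0$ for all $t\in[0,T)$.

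Now comes the bootstrap, ordered so as not to argue in a circle. From $0<R_-\le r_-(K_t)\le r_+(K_t)\le R_+$, Lemma \ref{lem: upper G1} gives $\mathcal{K}\le C$, i.e.\ $S_n\ge C_1>0$; and rearranging the bound $s^{\alpha}S_n^{\beta}\ge f_0$ of Lemma \ref{lem: lower G1}, using $\alpha,\beta<0$ and $s\ge R_-$, yields $S_n\le C_2<\infty$. With $C_1\le S_n\le C_2$ on $[0,T)$, Lemma \ref{lem: lower P} applies (its constants being independent of the subinterval) and gives, on $[T/2,T)$, a uniform positive lower bound $\kappa_i\ge\kappa_0$ on the principal curvatures; combined with $\prod_i\lambda_i=S_n\ge C_1$ this produces $C_1\kappa_0^{\,n-1}\le\lambda_i\le\kappa_0^{-1}$, a uniform two-sided bound on the eigenvalues of $[\mathfrak{r}_{ij}]$ on $[T/2,T)$ (cf.\ Remark \ref{rem: rem}). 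Hence (\ref{e: p flow ev of s}) is uniformly parabolic there with bounded coefficients; since $S_n^{\beta}$ is a convex function of the entries $\mathfrak{r}_{ij}$, standard fully nonlinear parabolic regularity (Krylov--Safonov and Evans--Krylov estimates, then Schauder bootstrapping) gives uniform $\mathcal{C}^\infty$ bounds on $s(\cdot,t)$ as $t\to T$. Therefore $s(\cdot,t)$ converges in $\mathcal{C}^\infty$ to the support function of a smooth, strictly convex, origin-symmetric body $K_T$ with $V(K_T)=V_\infty>0$, and short-time existence \cite{S} restarted from $K_T$ extends the solution past $T$, contradicting maximality. Hence $\lim_{t\to T}V(K_t)=0$.

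The step requiring the most care is the bootstrap: Lemma \ref{lem: lower P} demands a two-sided bound on $S_n$, and whereas the lower bound is handed to us by Lemma \ref{lem: upper G1}, the upper bound must be wrung out of the monotonicity of the minimum speed (Lemma \ref{lem: lower G1}) together with the inradius lower bound $r_-(K_t)\ge R_->0$ — which is exactly where the slab estimate, hence origin-symmetry, is used. After that, the passage from uniform parabolicity to $\mathcal{C}^\infty$ estimates up to $t=T$ and the continuation argument are routine.
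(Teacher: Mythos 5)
Your proof is correct and, for the harder half of the statement (the volume going to zero), follows essentially the paper's route: compare with the paper's own bootstrap, which cites Lemmas \ref{lem: lower G1}, \ref{lem: upper G1}, \ref{lem: lower P} and Remark \ref{rem: rem} in the same order to obtain two-sided bounds on $S_n$, then on the principal curvatures, then invokes Krylov--Safonov/Krylov regularity and continuation past $T$. You usefully spell out three points the paper leaves implicit: why $V(K_t)\to V_\infty>0$ forces a uniform inradius lower bound (the slab estimate using origin-symmetry and the circumradius bound $r_+(K_t)\le r_+(K_0)$), how the upper bound $S_n\le C_2$ is extracted from Lemma \ref{lem: lower G1} via $s^\alpha S_n^\beta\ge f_0$ with $\alpha,\beta<0$, and why one should apply Lemma \ref{lem: lower P} on a subinterval bounded away from $t=0$ (its bound degenerates as $t\to0$). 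Your rearranged two-sided bound on $\lambda_i$ is equivalent to what Remark \ref{rem: rem} delivers.

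The one genuine departure is the finiteness argument. The paper compares $K_t$ against the $p$-flow started from a large ball $B_0\supset K_0$, which shrinks to the origin in finite time, and concludes by the containment principle. You instead use Lemma \ref{lem: lower G1} to get a uniform positive lower bound $f_0$ on the speed, deduce $s(z,t)\le s(z,0)-f_0 t$, and conclude $T\le r_-(K_0)/f_0$. Both are correct. Your route avoids explicitly solving the ODE for shrinking balls and avoids invoking the containment principle at this step, at the cost of using origin-symmetry (to keep the origin interior) slightly earlier than the paper does; the paper's route is shorter to state but relies on an explicit ball solution and the comparison principle. Either is a clean way to prove $T<\infty$.
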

\begin{proof}
Let $B_0$ be a large ball that encloses $K_0$. It can be easily verified that the solution to the $p$-flow starting at $B_0$, denoted by $B_t$, shrinks to the origin in finite time. By the containment principle, $ K_t\subseteq B_t$, therefore $T$ must be finite. Now suppose that, contrary to our claim, $V(K_t)$ does not tend to zero. Thus, we must have $s(\cdot,t)\geq R_-$ on $[0,T)$, for some $R_->0$. By Lemmas \ref{lem: lower G1}, \ref{lem: upper G1}, \ref{lem: lower P} and Remark \ref{rem: rem} the principal curvatures remain uniformly bounded on $[0,T)$ from below and above. Consequently, evolution equation (\ref{e: p flow ev of s}) is uniformly parabolic on $[0,T)$, and bounds on higher derivatives of the support function follow \cite{Krylov-Safonov,K,Krylov}, see also \cite{Tsai}. Hence, we can extend the solution smoothly past $T$, contradicting the maximality of $T$.
\end{proof}
\section{Bounding the isoperimetric ratio}
We will state a stability version of the Blaschke-Santal\'{o} inequality which has been proved by K. Ball and K.J. B\"{o}r\"{o}czky in \cite{BB2} for $n\geq 2$ (See \cite{Ivaki2} for the planar case.). We only present their result in the class of origin-symmetric convex bodies. To do so, we start with the definition of the Banach-Mazur distance.
\begin{definition} The Banach-Mazur distance of two origin-symmetric convex bodies $K$ and $L$ is defined by
\[\delta_{BM}(K,L)=\ln\min\{\lambda\geq 1: L\subseteq AK\subseteq\lambda L~\mbox{~for~}~A\in GL(n+1)\}.\]
\end{definition}
\begin{theorem}[Stability of the Blaschke-Santal\'{o} inequality]\cite{BB2} Let $n\geq 2$ and $K\in\mathcal{F}^{n+1}_e$ satisfies
$V(K)V(K^{\ast})>\frac{\omega_{n+1}^2}{1+\varepsilon}$
for an $\varepsilon>0,$ then for some $\gamma$ depending only on $n$, we have
$\delta_{BM}(K,B)\leq \gamma \varepsilon^{\frac{2}{3(n+2)}}|\log \varepsilon|^{\frac{4}{3(n+2)}}.$
\end{theorem}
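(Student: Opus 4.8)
This stability estimate is due to Ball and B\"{o}r\"{o}czky \cite{BB2}, and we invoke it exactly as stated; for completeness, here is the strategy one would follow to prove it. The classical proof of the Blaschke--Santal\'{o} inequality $V(K)V(K^{\ast})\le\omega_{n+1}^2$ in the origin-symmetric case proceeds by Steiner symmetrization: with a suitable centring, the symmetral satisfies $V(S_uK)\,V((S_uK)^{\ast})\ge V(K)\,V(K^{\ast})$, and a well-chosen sequence of symmetrizations drives $K$ to a ball. The plan for a stability version is to make \emph{both} of these ingredients quantitative and then to convert the resulting ``near-symmetry in many directions'' into ``near-ellipsoidal''.

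Concretely, I would first normalise: since the Banach--Mazur distance and the volume product are both $GL(n+1)$-invariant, place $K$ in John position, so that $B\subseteq K\subseteq\sqrt{n+1}\,B$, and aim to show that under the hypothesis the circumradius of this position is already $1+o(1)$. Next, one needs a one-step stability estimate: there should be a modulus $\phi_n$, vanishing only at $0$, with
\[
V(S_uK)\,V\bigl((S_uK)^{\ast}\bigr)-V(K)\,V(K^{\ast})\ \ge\ \phi_n\bigl(\mathrm{asym}_u(K)\bigr),
\]
where $\mathrm{asym}_u(K)$ measures, in an $L^1$ sense, how far $K$ is (after centring) from being symmetric across $u^{\perp}$. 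Since $V(S_uK)\,V((S_uK)^{\ast})\le\omega_{n+1}^2$ as well, the hypothesis $V(K)V(K^{\ast})>\omega_{n+1}^2/(1+\varepsilon)$ bounds the left-hand side by $O(\varepsilon)$, hence $\mathrm{asym}_u(K)$ is small for \emph{every} direction $u$. Finally, a stability characterisation of ellipsoids --- a body that is $\eta$-close to symmetric across order-$n$ hyperplanes in general position is $\psi_n(\eta)$-close to an ellipsoid --- upgrades this to $\delta_{BM}(K,B)\le\Phi_n(\varepsilon)$ with $\Phi_n\to 0$. The explicit rate $\gamma\,\varepsilon^{\frac{2}{3(n+2)}}|\log\varepsilon|^{\frac{4}{3(n+2)}}$ of \cite{BB2} is obtained by running this scheme more carefully at the functional level (it is tied to a stability version of the Pr\'{e}kopa--Leindler inequality), which is where the logarithmic correction factor appears.

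The main obstacle is precisely the quantitative bookkeeping in the middle of this argument: one needs a one-step defect $\phi_n$ strong enough to survive composition with the convergence of iterated Steiner symmetrizations --- a convergence that, for deterministically chosen directions, is notoriously slow to control --- and one must track how the per-step errors accumulate against the number of steps and the loss incurred in the ellipsoid-stability step without degrading the power of $\varepsilon$. Balancing these competing losses is exactly what pins down the exponent $\frac{2}{3(n+2)}$ and the $|\log\varepsilon|$ term; since the full analysis is carried out in \cite{BB2}, we simply quote their theorem.
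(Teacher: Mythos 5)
You are right that the paper treats this as a black box: it is quoted verbatim from Ball and B\"{o}r\"{o}czky \cite{BB2} and never reproved, so simply invoking the reference is exactly what is done here. In that sense your proposal matches the paper.

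One remark on the heuristic you appended, since it slightly misrepresents the cited argument. You frame the proof as a quantitative version of an \emph{iterated Steiner symmetrization} scheme, with Pr\'{e}kopa--Leindler stability appearing only as a ``more careful functional-level'' refinement. The actual route in \cite{BB2} is the opposite: the Pr\'{e}kopa--Leindler inequality is the primary object, a stability version of it is established directly (for log-concave and, more generally, $p$-concave functions), and the Blaschke--Santal\'{o} stability then falls out in one step through the standard functional derivation of Blaschke--Santal\'{o} from Pr\'{e}kopa--Leindler (applied to the Gaussian-type weights built from $\lVert\cdot\rVert_K$ and $\lVert\cdot\rVert_{K^{\ast}}$, using $\lVert x\rVert_K^2+\lVert y\rVert_{K^{\ast}}^2\ge 2\langle x,y\rangle$). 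There is no iteration of symmetrizations to control, and the concern you raise about the slow convergence of deterministically chosen Steiner directions --- a genuine and serious obstacle for the scheme you sketch --- simply does not arise in their proof; the exponent $\tfrac{2}{3(n+2)}$ and the logarithmic factor come from the Pr\'{e}kopa--Leindler stability estimate itself and from the reduction of the two-function statement to a one-dimensional one, not from balancing a symmetrization count against a per-step defect. So while your overall instinct (quantify both a one-step defect and a convergence-to-ellipsoid step) is a sensible general template for stability results, the route actually taken by \cite{BB2} is more direct and avoids the hardest part of your plan.
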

The following result is proved by Stancu in \cite{S}.
\begin{theorem}[Monotonicity of the Mahler volume]\cite{S}\label{theorem: monoton}
Let $\{K_t\}$ be a smooth, strictly convex solution of (\ref{e: p flow ev of s}). Then $V(K_t)V(K_t^{\ast})$ is non-decreasing along the $p$-flow. The monotonicity is strict unless $K_t$ is an ellipsoid centered at the origin.
\end{theorem}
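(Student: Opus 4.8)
The plan is to show $\frac{d}{dt}\bigl(V(K_t)V(K_t^{\ast})\bigr)\ge 0$ by differentiating the two volumes directly and then reducing the resulting inequality to a Chebyshev-type (comonotonicity) inequality on $\mathbb{S}^{n}$. Write $q:=\frac{p}{p+n+1}\in(0,1)$, so that the speed of the flow is $\Phi:=s\bigl(\mathcal{K}/s^{n+2}\bigr)^{q}=s^{1-(n+2)q}S_n^{-q}$ and $\partial_t s=-\Phi$. I would use the standard support-function formula for volume, $V(K_t)=\frac{1}{n+1}\int_{\mathbb{S}^n}s\,S_n\,d\mu$ (with $d\mu$ the standard measure on $\mathbb{S}^{n}$), and, since $K_t$ is origin-symmetric so that the origin lies in its interior, the polar identity $\rho_{K_t^{\ast}}=1/s_{K_t}$ together with integration in polar coordinates to get $V(K_t^{\ast})=\frac{1}{n+1}\int_{\mathbb{S}^n}s^{-(n+1)}\,d\mu$. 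The first variation of volume gives $\frac{d}{dt}V(K_t)=\int_{\mathbb{S}^n}(\partial_t s)\,S_n\,d\mu=-\int_{\mathbb{S}^n}\Phi\,S_n\,d\mu$, while differentiating the polar volume under the integral sign gives $\frac{d}{dt}V(K_t^{\ast})=-\int_{\mathbb{S}^n}s^{-(n+2)}(\partial_t s)\,d\mu=\int_{\mathbb{S}^n}s^{-(n+2)}\Phi\,d\mu$.

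Next I would combine these. With $g_1:=s^{-(n+1)}$, $g_2:=sS_n$ and $f:=\Phi/s=\bigl(\mathcal{K}/s^{n+2}\bigr)^{q}$, the pointwise identities $s^{-(n+2)}\Phi=fg_1$ and $\Phi S_n=fg_2$ yield
\[
\frac{d}{dt}\bigl(V(K_t)V(K_t^{\ast})\bigr)=\frac{1}{n+1}\left[\Bigl(\int_{\mathbb{S}^n}g_2\,d\mu\Bigr)\Bigl(\int_{\mathbb{S}^n}fg_1\,d\mu\Bigr)-\Bigl(\int_{\mathbb{S}^n}g_1\,d\mu\Bigr)\Bigl(\int_{\mathbb{S}^n}fg_2\,d\mu\Bigr)\right],
\]
so it remains to prove $\bigl(\int_{\mathbb{S}^n}fg_1\,d\mu\bigr)\bigl(\int_{\mathbb{S}^n}g_2\,d\mu\bigr)\ge\bigl(\int_{\mathbb{S}^n}fg_2\,d\mu\bigr)\bigl(\int_{\mathbb{S}^n}g_1\,d\mu\bigr)$.

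The crux — the step I expect to be the main obstacle — is the observation that the quotient of the two weights is, up to a positive power, the speed density: with $h:=g_1/g_2=\mathcal{K}/s^{n+2}>0$ one has $f=h^{q}$, and since $q>0$ the functions $f$ and $h$ are comonotone on $\mathbb{S}^{n}$, i.e.\ $(f(z)-f(w))(h(z)-h(w))\ge0$ for all $z,w$. Integrating this inequality in both variables against the probability measure $d\nu:=g_2\,d\mu\big/\int_{\mathbb{S}^n}g_2\,d\mu$ and expanding gives $\int f h\,d\nu\ge\int f\,d\nu\int h\,d\nu$, which is precisely the required inequality after clearing denominators (this is Chebyshev's integral inequality for similarly ordered functions). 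Hence $\frac{d}{dt}\bigl(V(K_t)V(K_t^{\ast})\bigr)\ge0$. For the rigidity claim, equality forces $\int_{\mathbb{S}^n}\int_{\mathbb{S}^n}(f(z)-f(w))(h(z)-h(w))\,d\nu(z)\,d\nu(w)=0$; since this integrand equals $\bigl(h(z)^{q}-h(w)^{q}\bigr)\bigl(h(z)-h(w)\bigr)$, strictly positive whenever $h(z)\ne h(w)$, and since $g_2>0$, the function $h=\mathcal{K}/s^{n+2}$ must be constant on $\mathbb{S}^{n}$; by the classical characterization of ellipsoids as the only origin-symmetric convex bodies of constant centro-affine curvature, $K_t$ is then an origin-centered ellipsoid.

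To summarize the structure of the argument: the derivatives of $V(K_t)$ and $V(K_t^{\ast})$ are routine once one uses $\rho_{K_t^{\ast}}=1/s_{K_t}$ and the first variation of volume; the whole inequality then rests on the single observation that $g_1/g_2=\mathcal{K}/s^{n+2}$ is an increasing function of the flow-speed density $f=\Phi/s$, which converts $\frac{d}{dt}\bigl(V(K_t)V(K_t^{\ast})\bigr)$ into a Chebyshev inequality, and the only ingredient that is not self-contained is the ellipsoid rigidity for constant centro-affine curvature.
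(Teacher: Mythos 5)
The paper does not reproduce a proof for this result; it is quoted from Stancu's paper \cite{S}, so there is no in-text argument to compare against line by line. On its own merits, your proof is correct and is of the same general type one would expect from \cite{S}: compute $\frac{d}{dt}V(K_t)$ and $\frac{d}{dt}V(K_t^{\ast})$ from the flow and reduce the sign of $\frac{d}{dt}\bigl(V(K_t)V(K_t^{\ast})\bigr)$ to an integral inequality between weighted averages of the centro-affine curvature. Your bookkeeping is right: with $q=\tfrac{p}{p+n+1}\in(0,1)$, $\partial_t s=-\Phi$, the first variation formulas give $\frac{d}{dt}V=-\int_{\mathbb{S}^n}\Phi S_n\,d\mu$ and $\frac{d}{dt}V^{\ast}=\int_{\mathbb{S}^n}s^{-(n+2)}\Phi\,d\mu$, and the identification $g_1/g_2=\mathcal{K}/s^{n+2}=:h$, $f=\Phi/s=h^{q}$ turns the desired inequality into $\mathrm{Cov}_{\nu}(h^{q},h)\ge 0$ for the probability measure $d\nu\propto sS_n\,d\mu$, which is exactly Chebyshev's integral inequality for comonotone functions; equality forces $h$ constant on $\mathbb{S}^n$.

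Two small remarks. First, you label the rigidity input ``the classical characterization of ellipsoids as the only origin-symmetric convex bodies of constant centro-affine curvature.'' This is indeed a true and well-known statement, but it is not elementary: $\mathcal{K}/s^{n+2}\equiv\mathrm{const}$ says precisely that $\partial K$ is a compact (elliptic) affine hypersphere centred at the origin, and the fact that such a hypersurface is an ellipsoid is the Blaschke--Deicke theorem. It would be worth naming it as such, since it is the only genuinely nontrivial external input in the argument. Second, the same covariance inequality can be phrased as a one-line Hölder estimate ($\int h^{q}g_2\le(\int hg_2)^{q}(\int g_2)^{1-q}$ and $\int h^{1+q}g_2\ge(\int hg_2)^{1+q}(\int g_2)^{-q}$), which some readers may find more recognisably ``convex-geometric''; your Chebyshev formulation is equivalent and arguably cleaner, especially because it makes the equality analysis immediate.
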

Combining these last two theorems we obtain the next corollary.
\begin{corollary}
Let $\{K_t\}_{[0,T)}$ be a smooth, strictly convex solution of (\ref{e: p flow ev of s}). If
$V(K_0)V(K_0^{\ast})>\frac{\omega_{n+1}^2}{1+\varepsilon}$
for an $\varepsilon>0,$ then
$\delta_{BM}(K_t,B)\leq \gamma \varepsilon^{\frac{2}{3(n+2)}}|\log \varepsilon|^{\frac{4}{3(n+2)}}.$
\end{corollary}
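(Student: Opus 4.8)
The plan is to concatenate the two theorems just stated; the corollary carries essentially no content beyond them. First I would fix an arbitrary $t\in[0,T)$ and apply the monotonicity of the Mahler volume (Theorem~\ref{theorem: monoton}) to the restriction of the solution to $[0,t]$: since $s\mapsto V(K_s)V(K_s^{\ast})$ is non-decreasing along the $p$-flow, one gets
\[ V(K_t)V(K_t^{\ast})\ \geq\ V(K_0)V(K_0^{\ast})\ >\ \frac{\omega_{n+1}^2}{1+\varepsilon}. \]
Recalling that the solution stays in $\mathcal{F}^{n+1}_e$ for all $t$ (and that we are in the regime $n\geq 2$, the planar case being treated in \cite{Ivaki1,Ivaki3}), this says precisely that every body $K_t$ of the family satisfies the hypothesis of the stability version of the Blaschke-Santal\'{o} inequality \cite{BB2} with the \emph{same} parameter $\varepsilon$.

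Then I would simply invoke that stability theorem for each $K_t$: it produces the bound
\[ \delta_{BM}(K_t,B)\ \leq\ \gamma\,\varepsilon^{\frac{2}{3(n+2)}}\,|\log\varepsilon|^{\frac{4}{3(n+2)}}, \]
with $\gamma=\gamma(n)$ the constant of that theorem. Since $t\in[0,T)$ was arbitrary, this is the asserted estimate for the whole flow.

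The only point that deserves a word of care is that the pinching assumption is a \emph{strict} inequality while Theorem~\ref{theorem: monoton} only guarantees \emph{non-strict} monotonicity of the Mahler volume; but non-strict monotonicity already transports a strict inequality forward in time, exactly as in the first display, so no genuine gap appears. (For the resulting estimate to be non-vacuous one of course takes $\varepsilon$ small, e.g. $\varepsilon<1$, which is the regime relevant to Theorem~\ref{thm: mainthm}.) I therefore expect no real obstacle here: the corollary is a one-line composition of Theorem~\ref{theorem: monoton} with the stability inequality of Ball--B\"{o}r\"{o}czky.
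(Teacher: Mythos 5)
Your proof is correct and coincides with the paper's: the paper derives this corollary by simply combining the monotonicity of the Mahler volume (Theorem~\ref{theorem: monoton}) with the Ball--B\"{o}r\"{o}czky stability theorem, exactly as you do. The remarks about strict versus non-strict inequality and about the dimension restriction $n\geq 2$ are sensible but not needed beyond what you note.
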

Now from the definition of the Banach-Mazur distance we have:
\begin{corollary}\label{cor: pinching ratio}
Let $\{K_t\}_{[0,T)}$ be a smooth, strictly convex solution of (\ref{e: p flow ev of s}). If
$V(K_0)V(K_0^{\ast})>\frac{\omega_{n+1}^2}{1+\varepsilon}$
for an $\varepsilon>0,$ then for each time $t$ there exists a special linear transformation $A_{t}\in SL(n+1),$ such that
\[\frac{r_{+}(A_tK_t)}{r_{-}(A_{t}K_t)}\leq \delta:=\exp\left(\gamma \varepsilon^{\frac{2}{3(n+2)}}|\log \varepsilon|^{\frac{4}{3(n+2)}}\right).\]
\end{corollary}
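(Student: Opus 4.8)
The plan is to unwind the definition of the Banach-Mazur distance on the estimate furnished by the preceding corollary; this amounts to a short bookkeeping argument, and no property of the $p$-flow is used beyond what already entered that corollary. Fix $t\in[0,T)$ and let $\delta$ be the quantity defined in the statement, so that $\ln\delta=\gamma\,\varepsilon^{\frac{2}{3(n+2)}}|\log\varepsilon|^{\frac{4}{3(n+2)}}$; the preceding corollary reads exactly $\delta_{BM}(K_t,B)\le\ln\delta$. By the definition of $\delta_{BM}$, there is a number $\lambda_t\ge 1$ realizing the distance, i.e. $\ln\lambda_t=\delta_{BM}(K_t,B)$, hence $\lambda_t\le\delta$, together with a map $A\in GL(n+1)$ such that $B\subseteq AK_t\subseteq\lambda_t B$.

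Next I would normalize $A$ to lie in $SL(n+1)$. If $\det A<0$, precompose $A$ with an orthogonal reflection $R$: since $R$ is an isometry with $RB=B$, the inclusions become $B=RB\subseteq (RA)K_t\subseteq\lambda_t RB=\lambda_t B$, and $\det(RA)=-\det A>0$, so we may assume from the start that $\det A>0$. Put $\mu:=(\det A)^{-1/(n+1)}>0$ and $A_t:=\mu A$, so that $A_t\in SL(n+1)$. Dilating the inclusions by $\mu$ gives $\mu B\subseteq A_tK_t\subseteq\mu\lambda_t B$, whence $r_-(A_tK_t)\ge\mu$ and $r_+(A_tK_t)\le\mu\lambda_t$. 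Dividing, the factor $\mu$ cancels and $r_+(A_tK_t)/r_-(A_tK_t)\le\lambda_t\le\delta$, which is the assertion.

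The only point that is not completely automatic is the passage from $GL(n+1)$ to $SL(n+1)$: one has to observe that the quotient $r_+/r_-$ is invariant under dilations, so that the normalizing scalar $\mu$ drops out of the final bound, and that a possible orientation reversal is harmless because it can be absorbed into an isometry of the ball, to which $B$ is invariant. Everything else is immediate from the definitions of inradius, circumradius and Banach-Mazur distance together with the corollary just established; the substantive input (stability of the Blaschke-Santal\'{o} inequality and monotonicity of the Mahler volume along the $p$-flow) has already been spent, and the present statement merely re-expresses the $SL$-normalized Banach-Mazur bound as the concrete $r_+/r_-$ pinching that the later curvature estimates will use.
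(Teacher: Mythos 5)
Your argument is correct and is exactly what the paper has in mind; the paper offers no written proof of this corollary, prefacing it only with ``from the definition of the Banach--Mazur distance we have,'' and your write-up simply fills in the routine normalization (scale by $(\det A)^{-1/(n+1)}$ to land in $SL(n+1)$, fix the sign of the determinant with a reflection that preserves $B$, and note that $r_+/r_-$ is dilation-invariant so the scalar drops out). The only terminological quibble is that what you call ``precompose $A$ with $R$'' is in fact postcomposition, since you form $RA$ so as to act on the inclusions by applying $R$ on the outside; the computation itself is right.
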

Set $\alpha:=-1+\frac{2(n+1)p}{p+n+1}.$ In the remainder of this paper, we take an $\varepsilon>0$ small enough such that
\begin{align}\label{e: pinching}
&\left(1-\left[\delta^{1+\alpha}-0.5\right]^{\frac{1}{1+\alpha}}\right)\nonumber\\
&=\left(1-\left[\exp\left((1+\alpha)\gamma \varepsilon^{\frac{2}{3(n+2)}}|\log \varepsilon|^{\frac{4}{3(n+2)}}\right)-0.5\right]^{\frac{1}{1+\alpha}}\right)>0.
\end{align}
Notice that this assumption in particular implies that $1\leq\delta<1.5^{\frac{1}{1+\alpha}}.$
We now restate Theorem \ref{thm: mainthm}.
\begin{theorem}
Let $p\geq\frac{n+1}{n-1}$ and $X_{K_0}$ be a smooth, strictly convex embedding of $K_0\in\mathcal{F}^{n+1}_e.$ Then there exists a unique smooth solution $X:\mathcal{M}\times [0,T)\to\mathbb{R}^{n+1}$ of (\ref{e: p flow ev of x}) with initial data $X_{K_0}$. Moreover, if for an $\varepsilon>0$ satisfying assumption (\ref{e: pinching}) the Mahler volume of $K_0$ is $\varepsilon$-pinched, then the family of rescaled hypersurfaces given by $\frac{1}{\left(\frac{2p(n+1)}{p+n+1}(T-t)\right)^{\frac{p+n+1}{2p(n+1)}}}X(\mathcal{M},t)$ converges sequentially in the $\mathcal{C}^{\infty}$ topology to the unit sphere modulo $SL(n+1).$
\end{theorem}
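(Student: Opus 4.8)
The strategy is to pass to the scale-invariant normalization of the $p$-flow, establish uniform bounds for it \emph{modulo} $SL(n+1)$, and identify every subsequential limit as the unit ball via the strict monotonicity of the Mahler volume. Short-time existence and uniqueness follow from the strict parabolicity of \eqref{e: p flow ev of s} as in \cite{S}; Theorem~\ref{theorem: zero volume} gives $T<\infty$ and $V(K_t)\to 0$, and since \eqref{e: p flow ev of s} preserves evenness of $s$, each $K_t$ is origin-symmetric. Writing $\phi(t):=((1+\alpha)(T-t))^{1/(1+\alpha)}$, which is exactly the normalizing factor of the statement since $1+\alpha=\frac{2p(n+1)}{p+n+1}$, I set $\tilde s(\cdot,\tau):=s(\cdot,t)/\phi(t)$ and reparametrize time so that $d\tau/dt=((1+\alpha)(T-t))^{-1}$; then $\tau$ ranges over $[0,\infty)$ and $\tilde s$ solves the normalized flow $\partial_\tau\tilde s=\tilde s-\tilde s(\tilde{\mathcal{K}}/\tilde s^{n+2})^{p/(p+n+1)}$, which is again $SL(n+1)$-invariant and under which a concentric ball of radius $\rho$ obeys $\rho'=\rho-\rho^{-\alpha}$, whose only bounded solution defined for all $\tau$ is $\rho\equiv 1$. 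It suffices to show $\{\tilde K_\tau\}$ subconverges in $\mathcal{C}^{\infty}$ to $B$ modulo $SL(n+1)$.

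The first point is a $\mathcal{C}^{0}$-bound modulo $SL(n+1)$. The Mahler volume is scale-invariant, so $V(\tilde K_\tau)V(\tilde K_\tau^{\ast})=V(K_t)V(K_t^{\ast})$ is non-decreasing by Theorem~\ref{theorem: monoton} and hence stays $>\omega_{n+1}^2/(1+\varepsilon)$; the stability Blaschke-Santal\'{o} inequality and Corollary~\ref{cor: pinching ratio} then give, for each $\tau$, an $A_\tau\in SL(n+1)$ with $B_{r_\tau^-}\subseteq A_\tau\tilde K_\tau\subseteq B_{r_\tau^+}$ and $r_\tau^+\le\delta r_\tau^-$, where $\delta$ is close to $1$ by \eqref{e: pinching} and $B_r$ denotes the ball of radius $r$ about the origin. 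Comparing the unnormalized solution $\{K_t\}$ with the shrinking concentric ellipsoids $A_t^{-1}B_r$ --- which by $SL(n+1)$-invariance contract exactly like concentric balls and so become extinct at time $t+r^{1+\alpha}/(1+\alpha)$ --- and using that $\{K_t\}$ becomes extinct precisely at $T$, one obtains $r_\tau^-\le\phi(t)\le r_\tau^+$; after normalizing this reads $r_\tau^-\in[\delta^{-1},1]$ and $r_\tau^+\in[1,\delta]$. Consequently $A_\tau\tilde K_\tau$ is trapped between $B_{\delta^{-1}}$ and $B_\delta$ for \emph{all} $\tau$, and $V(\tilde K_\tau)\in[\omega_{n+1}\delta^{-(n+1)},\omega_{n+1}\delta^{n+1}]$.

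The regularity step is the main obstacle. Fix a late reference time $\tau_0$; by $SL(n+1)$-invariance $\{A_{\tau_0}\tilde K_\tau\}$ is again a solution of the normalized flow, and comparison with concentric-ball solutions keeps it trapped between two fixed balls on a window of fixed length $\ell_0>0$ depending only on $n,p,\varepsilon$. On this window Lemma~\ref{lem: upper G1} bounds $\tilde{\mathcal{K}}$ from above, hence $\tilde S_n$ from below; the matching lower bound on $\tilde{\mathcal{K}}$ --- equivalently a lower bound on the speed of the flow, i.e.\ the exclusion of flat spots, for which no pinching of the principal curvatures is available --- is the crux, and is obtained from a Harnack inequality for the $p$-flow, to be established in Section~4 by adapting Chow's Harnack inequality and the Andrews-McCoy argument \cite[Section~12]{BM}. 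This is genuinely delicate because for $p\ge\frac{n+1}{n-1}$ the flow speed is homogeneous of degree $>1$ in the principal curvatures. With $\tilde S_n$ two-sidedly bounded, Lemma~\ref{lem: lower P} together with Remark~\ref{rem: rem} gives uniform two-sided bounds on the principal curvatures after an elapsed time $\ell_0/2$, so the normalized equation is uniformly parabolic and higher-order estimates (as in the proof of Theorem~\ref{theorem: zero volume}; see \cite{Krylov-Safonov,K,Krylov}) yield uniform $\mathcal{C}^{\infty}$ bounds on $[\tau_0+\ell_0/2,\tau_0+\ell_0]$, with constants independent of $\tau_0$.

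Finally, the limit. Given $\tau_k\to\infty$, the $\mathcal{C}^{0}$-bound forces the normalizing maps to vary within a fixed bounded subset of $GL(n+1)$ on each window, so chaining the previous estimates over consecutive windows and gluing the $SL(n+1)$-frames along the bounded transition maps, one extracts along a subsequence a smooth, origin-symmetric, \emph{eternal} solution $\{K^\infty_\sigma\}_{\sigma\in\mathbb{R}}$ of the normalized flow which is uniformly trapped between two fixed balls. For every $\sigma$ its Mahler volume equals $m:=\lim_{t\to T}V(K_t)V(K_t^{\ast})$, so the limit flow has constant Mahler volume; by the equality case of Theorem~\ref{theorem: monoton} each $K^\infty_\sigma$ is an origin-centered ellipsoid, whence, by $SL(n+1)$-invariance and uniqueness, $\{K^\infty_\sigma\}=\{A_0 B_{\rho(\sigma)}\}$ for a fixed $A_0\in SL(n+1)$ with $\rho$ a bounded solution of $\rho'=\rho-\rho^{-\alpha}$, so $\rho\equiv 1$. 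Hence $K^\infty_0=A_0 B$, $V(\tilde K_{\tau_k})\to\omega_{n+1}$, and $\tilde K_{\tau_k}\to B$ in $\mathcal{C}^{\infty}$ modulo $SL(n+1)$; as the sequence $\tau_k\to\infty$ was arbitrary, $\tilde K_\tau$ --- equivalently $\tfrac{1}{\phi(t)}X(\mathcal{M},t)$ --- converges sequentially in $\mathcal{C}^{\infty}$ to $\mathbb{S}^n$ modulo $SL(n+1)$, and in particular $m=\omega_{n+1}^2$.
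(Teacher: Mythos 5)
Your proof is correct and follows the same overall architecture as the paper through the regularity step: short-time existence from strict parabolicity, $\mathcal{C}^0$-control modulo $SL(n+1)$ via the Ball--B\"or\"oczky stability of Blaschke--Santal\'o combined with Stancu's monotonicity of the Mahler volume (Corollary~\ref{cor: pinching ratio}), upper speed bound from Tso's trick (Lemma~\ref{lem: upper G1}), lower speed bound from a Harnack inequality in the style of Andrews--McCoy (Lemma~\ref{lem: Harnack est}, Lemma~\ref{lem: lower bound on the speed}), and then Krylov--Safonov regularity to get uniform $\mathcal{C}^\infty$ bounds on fixed-length windows in the frames $A_{\tau}\in SL(n+1)$. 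Where you genuinely diverge from the paper is the final identification of the limit. You extract an eternal solution of the normalized flow (by chaining windows and gluing the bounded frame transitions), observe that its Mahler volume is frozen at $m=\lim_{t\to T}V(K_t)V(K_t^{\ast})$, invoke the equality case of Theorem~\ref{theorem: monoton} to see that every time-slice is an origin-centered ellipsoid, and conclude $\rho\equiv 1$ from the ODE $\rho'=\rho-\rho^{-\alpha}$ being unstable at $\rho=1$. The paper instead extracts a single limit body $\tilde K_T$ on a window $[-1,0]$, shows it is an ellipsoid via the monotonicity of Lutwak's $p$-affine isoperimetric ratio (quoting \cite{S} and an argument from \cite{Ivaki1}), upgrades to full-time convergence of the ratio by monotonicity, invokes the equality case of the $p$-affine isoperimetric inequality, and finally pins down the radius by the containment-principle estimate on $r_\pm(A_{2t-T}K_t)$ relative to $((1+\alpha)(T-t))^{1/(1+\alpha)}$. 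Your route is more economical in that it reuses the Mahler-volume functional already deployed for the $\mathcal{C}^0$ bound and entirely avoids the $p$-affine surface area, the $p$-affine isoperimetric inequality, and the auxiliary argument from \cite{Ivaki1}; in exchange it requires a heavier compactness step (construction of an eternal limit flow and the diagonal/gluing argument, which you state rather than carry out), whereas the paper sidesteps the eternal solution by a direct radius comparison. Both approaches correctly rely on the strict form of a monotone functional to force the ellipsoid, and both then reduce the ellipsoid to the unit ball.
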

\section{Upper and lower bounds on the centro-affine curvature}
In this section, we obtain uniform upper and lower bounds on the centro-affine curvature. We begin by recalling an upper bound on the Gauss curvature established in \cite{IS}.
\begin{lemma}[Upper bound on the Gauss curvature]\cite{IS}\label{lem: upper G}
For any smooth, strictly convex solution $\{K_t\}_{[0,t_1]}$ of (\ref{e: p flow ev of s}) with $0<R_{-}\leq r_{-}(K_t)\leq r_{+}(K_t)\leq R_{+} < +\infty$, we have
$\mathcal{K}^{\frac{p}{n+p+1}}\leq \left(C+C't^{-\frac{np}{(n+1)(p+1)}}\right),$
where $C$ and $C'$ are constants depending on $n,p,R_{-}$ and $R_{+}.$
\end{lemma}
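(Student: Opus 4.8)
The plan is to run Tso's trick with a time weight, following \cite{IS}. Write the speed in (\ref{e: p flow ev of s}) as $\Xi := s^{\alpha}S_n^{\beta} = s\bigl(\mathcal{K}/s^{n+2}\bigr)^{p/(p+n+1)}$, with $\alpha := 1-\frac{(n+2)p}{p+n+1}\le 0$ and $\beta := -\frac{p}{p+n+1}<0$ as in the proof of Lemma \ref{lem: upper G1}, so that
\[
\partial_t\Xi = -\beta s^{\alpha}S_n^{\beta-1}(\dot S_n)^{ij}\bigl[\bar\nabla_i\bar\nabla_j\Xi+\bar g_{ij}\Xi\bigr]-\alpha s^{-1}\Xi^2 ;
\]
the linear part is parabolic, and the only reaction term $-\alpha s^{-1}\Xi^2\ge 0$ is harmless for an upper bound. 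I would set
\[
a := \frac{np}{(n+1)(p+1)} = \frac{n|\beta|}{1+n|\beta|},\qquad \Psi := \frac{\Xi}{s-R_-/2},\qquad \Phi := \Bigl(\frac{t}{1+t}\Bigr)^{a}\Psi .
\]
Here $R_-\le s\le R_+$ makes $\Psi$ well-defined and, since $\alpha,\beta\le 0$ and $S_n^{\beta}=\mathcal K^{|\beta|}$, keeps $\Psi$ between constant multiples of $\mathcal K^{|\beta|}$ (constants depending only on $n,p,R_-,R_+$); the crucial feature is the identity $a\bigl(1+\tfrac1{n|\beta|}\bigr)=1$, which is precisely what produces the exponent in the statement.

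Next I would apply the maximum principle to $\Phi$ on $\mathbb{S}^n\times[0,t_1]$. Since $\Phi$ vanishes at $t=0$ and is positive for $t>0$, its maximum $M$ is attained at some $(z_0,t_0)$ with $t_0>0$, where $\bar\nabla_i\Psi=0$, $\bar\nabla_i\bar\nabla_j\Psi\le 0$ and $\partial_t\Phi\ge 0$. The two spatial conditions are exactly those used in Lemma \ref{lem: upper G1} to derive (\ref{e: last step tso}), so at $(z_0,t_0)$ one still has $\partial_t\Psi\le\Psi^2\bigl(-n\beta-\alpha+1+\tfrac{\beta R_-}{2}\mathcal{H}\bigr)$. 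Feeding in $\partial_t\Phi\ge 0$, i.e. $\partial_t\Psi\ge-\tfrac{a}{t_0(1+t_0)}\Psi$, together with $\mathcal{H}\ge n\mathcal{K}^{1/n}\ge c_1\Psi^{1/(n|\beta|)}$ (the last step because $\Psi$ and $\mathcal K^{|\beta|}$ are comparable), and dividing by $\Psi>0$, one obtains
\[
\tfrac{|\beta|R_-}{2}c_1\,\Psi^{1+\frac1{n|\beta|}}\le\tfrac{a}{t_0(1+t_0)}+(-n\beta-\alpha+1)\Psi .
\]
Writing $\Psi(z_0,t_0)=M\bigl(\tfrac{1+t_0}{t_0}\bigr)^{a}$ and using $a\bigl(1+\tfrac1{n|\beta|}\bigr)=1$, the power $\Psi^{1+1/(n|\beta|)}$ picks up exactly one factor $\tfrac{1+t_0}{t_0}$; after multiplying by $\tfrac{t_0}{1+t_0}$ and using $\tfrac{t_0}{1+t_0}<1$, $a<1$, all $t_0$-dependence drops out and one is left with $c_2 M^{1+1/(n|\beta|)}\le a+c_3 M$ with $c_2,c_3>0$ depending only on $n,p,R_-,R_+$. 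Since the left side is superlinear, this forces $M\le C(n,p,R_-,R_+)$.

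Finally, $\Phi\le C$ gives $\Psi\le C\bigl(\tfrac{1+t}{t}\bigr)^{a}$, hence $\Xi=(s-R_-/2)\Psi\le C'\bigl(\tfrac{1+t}{t}\bigr)^{a}$; since $\Xi=s^{\alpha}S_n^{\beta}\ge R_+^{\alpha}\mathcal{K}^{|\beta|}$ and $|\beta|=\tfrac{p}{n+p+1}$, this yields
\[
\mathcal{K}^{\frac{p}{n+p+1}}\le C''\Bigl(\frac{1+t}{t}\Bigr)^{\frac{np}{(n+1)(p+1)}}\le C+C't^{-\frac{np}{(n+1)(p+1)}}
\]
on $[0,t_1]$, with all constants depending only on $n,p,R_-,R_+$, which is the claim. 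The one point needing care — and the reason the exponent $\frac{np}{(n+1)(p+1)}$ appears — is the choice of the weight: only with $a$ satisfying $a\bigl(1+\tfrac1{n|\beta|}\bigr)=1$ does the maximum-principle inequality, once $\Psi$ is expressed through $M$, collapse to a $t_0$-free superlinear inequality in $M$ (a bare power $t^a$ would leave a $t_1$-dependence; the factor $t/(1+t)$ also makes the bound uniform up to $t=t_1$). The evolution equation of $\Xi$, the favourable sign of its reaction term, and the domination of the good $\mathcal{H}$-term via $\mathcal H\ge n\mathcal K^{1/n}$ by a power of $\Psi$ are all already in place from Lemma \ref{lem: upper G1}, so nothing beyond routine bookkeeping remains.
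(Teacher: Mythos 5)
Your argument is correct, and it is the standard time-weighted Tso argument that \cite{IS} uses; since the present paper only cites \cite{IS} for this lemma, there is no in-paper proof to compare, but your route is the expected one. The mechanics all check: from the computation in Lemma \ref{lem: upper G1} one has, at a spatial maximum of $\Psi$, $\partial_t\Psi\leq\Psi^2\bigl(-n\beta-\alpha+1+\tfrac{\beta R_-}{2}\mathcal H\bigr)$ with $-n\beta-\alpha+1=\tfrac{2(n+1)p}{p+n+1}>0$ and $\beta<0$; the chain $\mathcal H\geq n\mathcal K^{1/n}$ together with $\mathcal K^{1/n}=(\Xi s^{-\alpha})^{1/(n|\beta|)}$ and $R_-\leq s\leq R_+$, $R_-/2\leq s-R_-/2\leq R_+$ gives $\mathcal H\geq c_1\Psi^{1/(n|\beta|)}$ with $c_1=c_1(n,p,R_-,R_+)$; and the exponent identity $a\bigl(1+\tfrac1{n|\beta|}\bigr)=1$, with $a=\tfrac{np}{(n+1)(p+1)}=\tfrac{n|\beta|}{1+n|\beta|}$, does exactly what you say when you substitute $\Psi=M\bigl(\tfrac{1+t_0}{t_0}\bigr)^a$ and multiply through by $\tfrac{t_0}{1+t_0}$. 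Two small points worth making explicit if you write this up: (i) one must divide by $\Psi(z_0,t_0)>0$, which is fine since $s-R_-/2\geq R_-/2>0$; (ii) the choice of weight $\bigl(\tfrac{t}{1+t}\bigr)^a$ rather than $t^a$ is not strictly necessary — the containment principle forces $t_1<T\leq R_+^{1+\alpha'}/(1+\alpha')$ with $\alpha'=-1+\tfrac{2(n+1)p}{p+n+1}$, so a $t_1$-dependent constant would still depend only on $n,p,R_+$ — but your version is cleaner and makes the constant manifestly independent of the time interval.
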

To obtain a lower bound on the centro-affine curvature, we may first establish a Harnack estimate. Although, Harnack inequality could be avoided, we present it here for future applications, such as stability of some inequalities \cite{Ivakig,Ivaki3}. In dimension two, the Harnack estimate for the $p$-flow was proved in \cite{Ivaki} with an application to classification of compact ancient solutions. To prove Lemma \ref{lem: Harnack est}, we closely follow Andrews \cite{BA5}.
\begin{lemma}[Harnack estimate]\label{lem: Harnack est} Let $\{K_t\}_t$ be a smooth solution of (\ref{e: p flow ev of s}). Then
\[\partial_t\left(s\left(\frac{\mathcal{K}}{s^{n+2}}\right)^{\frac{p}{p+n+1}}t^{\frac{np}{(p+1)(n+1)}}\right)\geq 0,\]
or equivalently
\[\partial_t\left(s\left(\frac{\mathcal{K}}{s^{n+2}}\right)^{\frac{p}{p+n+1}}\right)\geq -\frac{np}{(p+1)(n+1)t}\left(s\left(\frac{\mathcal{K}}{s^{n+2}}\right)^{\frac{p}{p+n+1}}\right).\]
\end{lemma}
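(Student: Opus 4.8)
The plan is to prove the Harnack inequality by a maximum-principle argument for the speed of the flow on $\mathbb{S}^{n}\times[0,T)$, following Andrews's scheme. Abbreviate the speed by $G:=s\left(\frac{\mathcal{K}}{s^{n+2}}\right)^{\frac{p}{p+n+1}}=s^{\alpha}S_{n}^{\beta}$ with $\alpha=1-\frac{(n+2)p}{p+n+1}$ and $\beta=-\frac{p}{p+n+1}$, so that $\alpha,\beta<0$ (as $p>1$), $\partial_t s=-G$, and $\partial_t\mathfrak{r}_{ij}=-(\bar\nabla_i\bar\nabla_j G+G\bar g_{ij})=:-H_{ij}$. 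Exactly as in the proof of Lemma~\ref{lem: lower G1}, $G$ satisfies the strictly parabolic equation
\[\partial_t G=b^{ij}\bigl(\bar\nabla_i\bar\nabla_j G+G\bar g_{ij}\bigr)-\frac{\alpha}{s}G^{2},\qquad b^{ij}:=-\beta\, s^{\alpha}S_{n}^{\beta-1}(\dot{S}_{n})^{ij},\]
with $[b^{ij}]$ positive definite. Since the solution is smooth and strictly convex (with the origin interior), $G$ is smooth and positive on $\mathbb{S}^{n}\times[0,T)$; in particular $\partial_t\log G$ is bounded near $t=0$.

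Next I would derive the evolution of the Harnack quantity $Q:=\partial_t\log G$. Writing $u:=\log G$, the equation for $G$ becomes $Q=b^{ij}(\bar\nabla_i\bar\nabla_j u+\bar\nabla_i u\,\bar\nabla_j u+\bar g_{ij})-\frac{\alpha}{s}G$; differentiating this in $t$ (the Levi--Civita connection $\bar\nabla$ is time-independent, so $\partial_t$ commutes with $\bar\nabla$) and substituting $\partial_t S_{n}=-(\dot{S}_{n})^{ij}H_{ij}$ and $\partial_t(\dot{S}_{n})^{ij}=-(\ddot{S}_{n})^{ij,kl}H_{kl}$ yields
\[\partial_t Q=b^{ij}\bar\nabla_i\bar\nabla_j Q+2\, b^{ij}\bar\nabla_i u\,\bar\nabla_j Q+\mathcal{R},\qquad \mathcal{R}=(\partial_t b^{ij})\frac{H_{ij}}{G}-\frac{\alpha G}{s}Q-\frac{\alpha G^{2}}{s^{2}}.\]

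The heart of the matter --- and the step I expect to be the only genuine obstacle --- is a lower bound for $\mathcal{R}$ on the set $\{Q\le0\}$. Expanding $\partial_t b^{ij}$ and abbreviating $P:=(\dot{S}_{n})^{ij}H_{ij}$, $R:=(\ddot{S}_{n})^{ij,kl}H_{ij}H_{kl}$, the term $(\partial_t b^{ij})\frac{H_{ij}}{G}$ is a combination of $P$, $P^{2}$ and $R$ with explicit coefficients in $s,S_{n},G$. I would then use three facts: (i) concavity of $S_{n}^{1/n}$ on positive-definite matrices, i.e.\ $R\le\frac{n-1}{n}\frac{P^{2}}{S_{n}}$ for \emph{every} symmetric $H$, which bounds the $R$-term from below because $\beta<0$; (ii) the identity $b^{ij}H_{ij}=\partial_t G+\frac{\alpha}{s}G^{2}=G\bigl(Q+\frac{\alpha}{s}G\bigr)$ (equivalently $-\beta s^{\alpha}S_{n}^{\beta-1}P=G(Q+\frac{\alpha}{s}G)$), which re-expresses $P$ through $Q$; and (iii) the homogeneity $(\dot{S}_{n})^{ij}\mathfrak{r}_{ij}=nS_{n}$. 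After regrouping one should arrive at
\[\mathcal{R}\ \ge\ \mu\, Q^{2}+2(\mu-1)\frac{\alpha G}{s}\, Q+\Bigl(\mu-1-\frac{1}{\alpha}\Bigr)\frac{\alpha^{2}G^{2}}{s^{2}},\qquad \mu:=\frac{(n+1)(p+1)}{np}=1-\frac{1}{n\beta}.\]
Since $p>1$ forces $\alpha<0$ (hence $\frac{\alpha G}{s}<0$), and one checks directly that $\mu>1$ and $\mu-1-\frac{1}{\alpha}>0$, the last two summands are nonnegative whenever $Q\le0$; thus $\mathcal{R}\ge\mu Q^{2}$ on $\{Q\le0\}$, strictly since $\alpha\neq0$ and $G>0$. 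The one delicate part is the bookkeeping that produces precisely this $\mu$ --- whose reciprocal $\frac{1}{\mu}=\frac{np}{(p+1)(n+1)}$ is of course forced by the exponent in the statement.

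Finally I would close with the maximum principle. Set $c:=\frac{np}{(p+1)(n+1)}=\frac1\mu$ and $m(t):=\min_{\mathbb{S}^{n}}\bigl(Q(\cdot,t)+\frac{c}{t}\bigr)$; since $Q$ is bounded near $t=0$ while $\frac{c}{t}\to+\infty$, we have $m(t)\to+\infty$ as $t\to0^{+}$. If $m$ were ever nonpositive, let $t_{1}>0$ be the first time $m(t_{1})=0$ and let $x_{1}$ realize the minimum; there $\bar\nabla Q=0$, the Hessian $\bar\nabla^{2}Q$ is positive semidefinite, $Q(x_{1},t_{1})=-c/t_{1}<0$, and $\partial_t\bigl(Q+\frac{c}{t}\bigr)\le0$, i.e.\ $\partial_t Q(x_{1},t_{1})\le c/t_{1}^{2}$. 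On the other hand, dropping the nonnegative second-order term and the vanishing gradient term in the evolution of $Q$ and using the reaction bound gives $\partial_t Q(x_{1},t_{1})\ge\mathcal{R}>\mu Q^{2}=\mu c^{2}/t_{1}^{2}=c/t_{1}^{2}$ (because $\mu c=1$), a contradiction. Hence $\partial_t\log G>-\frac{c}{t}$ on $\mathbb{S}^{n}\times(0,T)$, which is the second displayed inequality; the first is equivalent to it since $\partial_t\bigl(G\, t^{c}\bigr)=t^{c-1}G\bigl(t\,\partial_t\log G+c\bigr)\ge0$.
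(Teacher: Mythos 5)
Your proposal is correct and takes essentially the same route as the paper: both are Andrews-style Harnack arguments that evolve a modified Harnack quantity, exploit the concavity inequality $(\ddot{S}_n)^{ij,kl}-\frac{n-1}{nS_n}(\dot{S}_n)^{ij}(\dot{S}_n)^{kl}\le 0$, and close with the parabolic maximum principle; the only difference is that you work with the scale-invariant $Q=\partial_t\log G$ while the paper carries the un-normalized quantities $\mathcal{P}=-\partial_t G$ and $\mathcal{R}=t\mathcal{P}-\frac{\gamma}{\gamma-1/n}G$, which is just a notational change (the paper's $\mathcal{R}$ equals $-tG\bigl(Q+\tfrac{c}{t}\bigr)$). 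The ``delicate bookkeeping'' you flagged does check out: with $a=\frac{\alpha G}{s}$ and $W=Q+a$, one finds $\mathcal{R}\ge QW-\frac{W^2}{n\beta}-aQ-\frac{a^2}{\alpha}=\mu Q^2+2(\mu-1)aQ+\bigl(\mu-1-\tfrac{1}{\alpha}\bigr)a^2$ exactly as you claimed.
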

\begin{proof}
For simplicity, we set $\gamma=-\frac{p}{p+n+1}.$ To prove the lemma, we will use the parabolic maximum principle to show that $\mathcal{R}$ defined by
\begin{equation}\label{def: expression R}
\mathcal{R}:=-t\partial_t\left(s^{1+(n+2)\gamma}S_n^{\gamma}\right)-\frac{\gamma}{\gamma-1/n}s^{1+(n+2)\gamma}S_n^{\gamma}
\end{equation}
is negative as long as the $p$-flow exists. Define $\mathcal{P}:=-\partial_t\left(s^{1+(n+2)\gamma}S_n^{\gamma}\right).$ Using the evolution equations of $s$ and $\mathfrak{r}_{ij}$ we get the following expression for $\mathcal{P}:$
\begin{align}\label{def: Q}
\mathcal{P}&=(1+(n+2)\gamma)s^{1+2(n+2)\gamma}S_{n}^{2\gamma}\nonumber\\
&+\gamma s^{1+(n+2)\gamma}S_{n}^{\gamma-1}(\dot{S}_n)^{ij}\left[\bar{\nabla}_i\bar{\nabla}_j\left(s^{1+(n+2)\gamma}S_n^{\gamma}\right)+\bar{g}_{ij}s^{1+(n+2)\gamma}S_n^{\gamma}\right]
\nonumber\\
&:=(1+(n+2)\gamma)s^{1+2(n+2)\gamma}S_{n}^{2\gamma}+\gamma s^{1+(n+2)\gamma}S_{n}^{\gamma-1}\mathcal{Q}.
\end{align}
To calculate the evolution equation of $\mathcal{P}$, we will repeatedly use  the evolution equations of $s$ and $\mathfrak{r}_{ij}.$
\begin{align*}
&\partial_t \mathcal{P}\\
&=-(1+(n+2)\gamma)(1+2(n+2)\gamma)s^{1+3(n+2)\gamma}S_{n}^{3\gamma}\\
&-2\gamma(1+(n+2)\gamma)s^{1+2(n+2)\gamma}S_n^{2\gamma-1}(\dot{S}_n)^{ij}
\left[\bar{\nabla}_i\bar{\nabla}_j\left(s^{1+(n+2)\gamma}S_n^{\gamma}\right)+\bar{g}_{ij}s^{1+(n+2)\gamma}S_n^{\gamma}\right]\\
&-\gamma(1+(n+2)\gamma)s^{1+2(n+2)\gamma}S_n^{2\gamma-1}(\dot{S}_n)^{ij}
\left[\bar{\nabla}_i\bar{\nabla}_j\left(s^{1+(n+2)\gamma}S_n^{\gamma}\right)+\bar{g}_{ij}s^{1+(n+2)\gamma}S_n^{\gamma}\right]\\
&-\gamma(\gamma-1)s^{1+(n+2)\gamma}S_{n}^{\gamma-2}
\left((\dot{S}_n)^{ij}\left[\bar{\nabla}_i\bar{\nabla}_j\left(s^{1+(n+2)\gamma}S_n^{\gamma}\right)+\bar{g}_{ij}s^{1+(n+2)\gamma}S_n^{\gamma}\right]\right)^2\\
&-\gamma s^{1+(n+2)\gamma}S_n^{\gamma-1}(\ddot{S}_n)^{ij,kl}\left[\bar{\nabla}_i\bar{\nabla}_j \mathcal{P}+\bar{g}_{ij}\mathcal{P}\right]\left[\bar{\nabla}_k\bar{\nabla}_l \mathcal{P}+\bar{g}_{kl}\mathcal{P}\right]\\
&-\gamma s^{1+(n+2)\gamma}S_n^{\gamma-1}
(\dot{S}_n)^{ij}\left[\bar{\nabla}_i\bar{\nabla}_j \mathcal{P}+\bar{g}_{ij}\mathcal{P}\right]\\
&\leq-(1+(n+2)\gamma)(1+2(n+2)\gamma)s^{1+3(n+2)\gamma}S_{n}^{3\gamma}\\
&-3\gamma(1+(n+2)\gamma)s^{1+2(n+2)\gamma}S_n^{2\gamma-1}\mathcal{Q}-\gamma(\gamma-1)s^{1+(n+2)\gamma}S_{n}^{\gamma-2}\mathcal{Q}^2\\
&-\frac{n-1}{n}\gamma s^{1+(n+2)\gamma}S_{n}^{\gamma-2}\mathcal{Q}^2-\gamma s^{1+(n+2)\gamma}S_n^{\gamma-1}
(\dot{S}_n)^{ij}\left[\bar{\nabla}_i\bar{\nabla}_j \mathcal{P}+\bar{g}_{ij}\mathcal{P}\right]\\
&=-(1+(n+2)\gamma)(1+2(n+2)\gamma)s^{1+3(n+2)\gamma}S_{n}^{3\gamma}\\
&-3\gamma(1+(n+2)\gamma)s^{1+2(n+2)\gamma}S_n^{2\gamma-1}\mathcal{Q}-\gamma\left(\gamma-1+\frac{n-1}{n}\right)s^{1+(n+2)\gamma}S_{n}^{\gamma-2}\mathcal{Q}^2\\
&-\gamma s^{1+(n+2)\gamma}S_n^{\gamma-1}
(\dot{S}_n)^{ij}\left[\bar{\nabla}_i\bar{\nabla}_j \mathcal{P}+\bar{g}_{ij}\mathcal{P}\right],
\end{align*}
where we used concavity of $S_n^{1/n}$:
\[\left((\ddot{S}_n)^{ij,kl}-\frac{n-1}{nS_n}(\dot{S}_n)^{ij}(\dot{S}_n)^{kl}\right)a_{ij}a_{lk}\leq 0\]
for every symmetric matrix $[a_{ij}]_{1\leq i,j\leq n}.$
By the definition of $\mathcal{Q}$, (\ref{def: Q}), we get
\begin{align}\label{e: q1}
\mathcal{Q}=\frac{\mathcal{P}-(1+(n+2)\gamma)s^{1+2(n+2)\gamma}S_{n}^{2\gamma}}{\gamma s^{1+(n+2)\gamma}S_{n}^{\gamma-1}}
\end{align}
and
\begin{align}\label{e: q2}
\mathcal{Q}^2=&\left(\frac{\mathcal{P}-(1+(n+2)\gamma)s^{1+2(n+2)\gamma}S_{n}^{2\gamma}}{\gamma s^{1+(n+2)\gamma}S_{n}^{\gamma-1}}\right)^2\nonumber\\
=&\frac{\mathcal{P}^2}{\gamma^2s^{2+2(n+2)\gamma}S_n^{2\gamma-2}}-\frac{2(1+(n+2)\gamma)}{\gamma^2}\frac{\mathcal{P}S_n^2}{s}\nonumber\\
&+\frac{(1+(n+2)\gamma)^2}{\gamma^2}s^{2(n+2)\gamma}S_n^{2\gamma+2}.
\end{align}
We replace $\mathcal{Q}$ and $\mathcal{Q}^2$ in the evolution equation of $\mathcal{P}$ by their equivalent expressions given in (\ref{e: q1}) and (\ref{e: q2}). We find that
\begin{align}\label{e: ev P}
&\partial_t \mathcal{P}\nonumber\\
&\leq-(1+(n+2)\gamma)(1+2(n+2)\gamma)s^{1+3(n+2)\gamma}S_{n}^{3\gamma}\nonumber\\
&-3\gamma(1+(n+2)\gamma)s^{1+2(n+2)\gamma}S_n^{2\gamma-1}\left(\frac{\mathcal{P}-(1+(n+2)\gamma)s^{1+2(n+2)\gamma}S_{n}^{2\gamma}}{\gamma s^{1+(n+2)\gamma}S_{n}^{\gamma-1}}\right)\nonumber\\
&-\gamma\left(\gamma-1+\frac{n-1}{n}\right)s^{1+(n+2)\gamma}S_{n}^{\gamma-2}\left(\frac{\mathcal{P}^2}{\gamma^2s^{2+2(n+2)\gamma}S_n^{2\gamma-2}}\right)\nonumber\\
&+\gamma\left(\gamma-1+\frac{n-1}{n}\right)s^{1+(n+2)\gamma}S_{n}^{\gamma-2}\left(\frac{2(1+(n+2)\gamma)}{\gamma^2}\frac{\mathcal{P}S_n^2}{s}\right)\nonumber\\
&-\gamma\left(\gamma-1+\frac{n-1}{n}\right)s^{1+(n+2)\gamma}S_{n}^{\gamma-2}\left(\frac{(1+(n+2)\gamma)^2}{\gamma^2}s^{2(n+2)\gamma}S_n^{2\gamma+2}\right)\nonumber\\
&-\gamma s^{1+(n+2)\gamma}S_n^{\gamma-1}
(\dot{S}_n)^{ij}\left[\bar{\nabla}_i\bar{\nabla}_j \mathcal{P}+\bar{g}_{ij}\mathcal{P}\right]\nonumber\\
&=-\gamma s^{1+(n+2)\gamma}S_n^{\gamma-1}
(\dot{S}_n)^{ij}\left[\bar{\nabla}_i\bar{\nabla}_j \mathcal{P}+\bar{g}_{ij}\mathcal{P}\right]\\
&+\left[(1+(n+2)\gamma)(2+(n+2)\gamma)-\frac{(\gamma-1/n)(1+(n+2)\gamma)^2}{\gamma}\right]s^{1+3(n+2)\gamma}S_{n}^{3\gamma}\nonumber\\
&+\left[-3(1+(n+2)\gamma)+\frac{2(\gamma-1/n)(1+(n+2)\gamma)}{\gamma}\right]s^{(n+2)\gamma}S_{n}^{\gamma}\mathcal{P}\nonumber\\
&-\frac{\gamma-1/n}{\gamma}\frac{\mathcal{P}^2}{s^{1+(n+2)\gamma}S_{n}^{\gamma}}.\nonumber
\end{align}
We proceed to obtain the evolution equation of $\mathcal{R}=t\mathcal{P}-\frac{\gamma}{\gamma-1/n}s^{1+(n+2)\gamma}S_n^{\gamma}$. First, notice that
\begin{align}\label{e: laplacian R}
-&\gamma s^{1+(n+2)\gamma}S_n^{\gamma-1}(\dot{S}_n)^{ij}\bar{\nabla}_i\bar{\nabla}_j\mathcal{R}=-t\gamma s^{1+(n+2)\gamma}S_n^{\gamma-1}(\dot{S}_n)^{ij}\bar{\nabla}_i\bar{\nabla}_j\mathcal{P}\\
&+\frac{\gamma^2}{\gamma-1/n}s^{1+(n+2)\gamma}S_{n}^{\gamma-1}(\dot{S}_n)^{ij}\bar{\nabla}_i\bar{\nabla}_j(s^{1+(n+2)\gamma}S_{n}^{\gamma})\nonumber.
\end{align}
Second, by identity (\ref{def: Q}), the evolution equation of $\mathcal{P}$ given by equation (\ref{e: ev P}), and identity (\ref{e: laplacian R}), it is straightforward to calculate
\begin{align*}
&\partial_t \mathcal{R}\\
&\leq-t\gamma s^{1+(n+2)\gamma}S_n^{\gamma-1}
(\dot{S}_n)^{ij}\left[\cancel{\bar{\nabla}_i\bar{\nabla}_j} \mathcal{P}+\bar{g}_{ij}\mathcal{P}\right]\\
&+t\left[(1+(n+2)\gamma)(2+(n+2)\gamma)-\frac{(\gamma-1/n)(1+(n+2)\gamma)^2}{\gamma}\right]s^{1+3(n+2)\gamma}S_{n}^{3\gamma}\\
&+t\left[-3(1+(n+2)\gamma)+\frac{2(\gamma-1/n)(1+(n+2)\gamma)}{\gamma}\right]s^{(n+2)\gamma}S_{n}^{\gamma}\mathcal{P}\\
&-t\frac{\gamma-1/n}{\gamma}\frac{\mathcal{P}^2}{s^{1+(n+2)\gamma}S_{n}^{\gamma}}+\mathcal{P}+\frac{\gamma}{\gamma-1}\bcancel{\mathcal{P}}
-\gamma s^{1+(n+2)\gamma}S_n^{\gamma-1}(\dot{S}_n)^{ij}\bar{\nabla}_i\bar{\nabla}_j\mathcal{R}\\
&+t\gamma s^{1+(n+2)\gamma}S_n^{\gamma-1}(\dot{S}_n)^{ij}\cancel{\bar{\nabla}_i\bar{\nabla}_j}\mathcal{P}
-\frac{\gamma^2}{\gamma-1/n}s^{1+(n+2)\gamma}S_{n}^{\gamma-1}(\dot{S}_n)^{ij}\bcancel{\bar{\nabla}_i\bar{\nabla}_j}(s^{1+(n+2)\gamma}S_{n}^{\gamma})\\
&-\frac{\gamma^2}{\gamma-1/n}s^{1+(n+2)\gamma}S_{n}^{\gamma-1}\bcancel{(\dot{S}_n)^{ij}}(s^{1+(n+2)\gamma}S_{n}^{\gamma})\bar{g}_{ij}\\
&+\frac{\gamma^2}{\gamma-1/n}s^{1+(n+2)\gamma}S_{n}^{\gamma-1}(\dot{S}_n)^{ij}(s^{1+(n+2)\gamma}S_{n}^{\gamma})\bar{g}_{ij}\\
&-\frac{\gamma(1+(n+2)\gamma)}{\gamma-1/n}s^{1+(n+2)\gamma}\bcancel{S_{n}^{\gamma-1}}\left(s^{1+2(n+2)\gamma}S_n^{2\gamma}\right)\\
&+\frac{\gamma(1+(n+2)\gamma)}{\gamma-1/n}s^{1+(n+2)\gamma}S_{n}^{\gamma-1}\left(s^{1+2(n+2)\gamma}S_n^{2\gamma}\right).
\end{align*}
Consequently,
\begin{align*}
&\partial_t \mathcal{R}\\
&=-\gamma s^{1+(n+2)\gamma}S_n^{\gamma-1}(\dot{S}_n)^{ij}\bar{\nabla}_i\bar{\nabla}_j\mathcal{R}\\
&+t\left[(1+(n+2)\gamma)(2+(n+2)\gamma)-\frac{(\gamma-1/n)(1+(n+2)\gamma)^2}{\gamma}\right]s^{1+3(n+2)\gamma}S_{n}^{3\gamma}\\
&+t\left[-3(1+(n+2)\gamma)+\frac{2(\gamma-1/n)(1+(n+2)\gamma)}{\gamma}\right]s^{(n+2)\gamma}S_{n}^{\gamma}\mathcal{P}\\
&-t\frac{\gamma-1/n}{\gamma}\frac{\mathcal{P}^2}{s^{1+(n+2)\gamma}S_{n}^{\gamma}}+\mathcal{P}-t\gamma s^{1+(n+2)\gamma}S_n^{\gamma-1}(\dot{S}_n)^{ij}\bar{g}_{ij}\mathcal{P}\\
&+\frac{\gamma^2}{\gamma-1/n}s^{2+2(n+2)\gamma}S_{n}^{2\gamma-1}(\dot{S}_n)^{ij}\bar{g}_{ij}+\frac{\gamma(1+(n+2)\gamma)}{\gamma-1/n}s^{2+3(n+2)\gamma}S_{n}^{3\gamma-1}.
\end{align*}
To make this last computation useful, using the definition of $\mathcal{R}$ we will replace $t\mathcal{P}$
by $\mathcal{R}+\frac{\gamma}{\gamma-1/n}s^{1+(n+2)\gamma}S_n^{\gamma}.$ Thus, at the point where the maximum of $\mathcal{R}$ is achieved we get
\begin{align*}
&\partial_t \mathcal{R}\leq\\
&\left[-3(1+(n+2)\gamma)+\frac{2(\gamma-1/n)(1+(n+2)\gamma)}{\gamma}\right]s^{(n+2)\gamma}S_{n}^{\gamma}\mathcal{R}\\
&+\frac{\gamma}{\gamma-1/n}\left[-3(1+(n+2)\gamma)+\frac{2(\gamma-1/n)(1+(n+2)\gamma)}{\gamma}\right]s^{1+2(n+2)\gamma}S_{n}^{2\gamma}\\
&-\frac{\gamma-1/n}{\gamma}\frac{1}{s^{1+(n+2)\gamma}S_{n}^{\gamma}}\left(\mathcal{R}+\cancel{\frac{\gamma}{\gamma-1/n}}s^{1+(n+2)\gamma}S_n^{\gamma}\right)\mathcal{P}
+\cancel{~\mathcal{P}~}\\
&-\gamma s^{1+(n+2)\gamma}S_n^{\gamma-1}(\dot{S}_n)^{ij}\bar{g}_{ij}\left(\mathcal{R}+\bcancel{\frac{\gamma}{\gamma-1/n}}s^{1+(n+2)\gamma}S_n^{\gamma}\right)\\
&+\bcancel{\frac{\gamma^2}{\gamma-1/n}}s^{2+2(n+2)\gamma}S_{n}^{2\gamma-1}(\dot{S}_n)^{ij}\bar{g}_{ij}+\frac{\gamma(1+(n+2)\gamma)}{\gamma-1/n}s^{2+3(n+2)\gamma}
S_{n}^{3\gamma-1}\\
&+t\left[(1+(n+2)\gamma)(2+(n+2)\gamma)-\frac{(\gamma-1/n)(1+(n+2)\gamma)^2}{\gamma}\right]s^{1+3(n+2)\gamma}S_{n}^{3\gamma}.
\end{align*}
There are two groups of terms: those that are multiple of $\mathcal{R}$ and those that are not. The latter group includes the terms on the second, fifth, and sixth lines which are all negative as $-1<\gamma\leq -1/(n+2).$ In view of the parabolic maximum principle, the former group is favorable. Since at the time $t=0$, we have $\mathcal{R}<0$
manifestly, we conclude that it remains negative.
\end{proof}
We continue with the following observation on obtaining lower bounds on the speed which first appeared in Smoczyk \cite{Sm} in his study of flow of star-shaped hypersurfaces by the mean curvature, and has been used in quite a few papers since then \cite{BM,BMZ,Ivaki3}.
\begin{lemma}\label{lem: app} For any $z\in\mathbb{S}^n$ the quantity
\[\left(1-\frac{np}{(p+1)(n+1)}\right)\left(s(z,t)-s(z,t_0)\right)+(t-t_0)\left(s\left(\frac{\mathcal{K}}{s^{n+2}}\right)^{\frac{p}{p+n+1}}\right)(z,t)\]
is nonnegative for all $t_0\leq t< T.$
\end{lemma}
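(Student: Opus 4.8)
The plan is to fix $z\in\mathbb{S}^n$, abbreviate the speed by $G:=s\left(\frac{\mathcal{K}}{s^{n+2}}\right)^{\frac{p}{p+n+1}}$ and set $\mu:=\frac{np}{(p+1)(n+1)}$, and then regard the displayed quantity as a function of $t$ alone,
\[
\phi(t):=(1-\mu)\bigl(s(z,t)-s(z,t_0)\bigr)+(t-t_0)\,G(z,t).
\]
One checks at once that $\mu\in(0,1)$ (the inequality $np<(p+1)(n+1)$ reduces to $0<p+n+1$), that $G>0$ since $s>0$ and $\mathcal{K}>0$ for an origin-symmetric convex body, and that $\phi(t_0)=0$. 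Thus it suffices to prove that $\phi$ is nondecreasing on $[t_0,T)$.

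To do this I would differentiate in $t$, using the flow equation (\ref{e: p flow ev of s}) in the form $\partial_t s=-G$:
\[
\phi'(t)=-(1-\mu)G+G+(t-t_0)\,\partial_t G=\mu\,G+(t-t_0)\,\partial_t G.
\]
The Harnack estimate of Lemma \ref{lem: Harnack est}, in its equivalent form $\partial_t G\geq-\frac{\mu}{t}\,G$, then yields
\[
\phi'(t)\geq\mu\,G-\frac{t-t_0}{t}\,\mu\,G=\mu\,G\,\frac{t_0}{t}\geq 0,
\]
because $t_0\geq 0$, $t>0$ and $G>0$. Hence $\phi(t)\geq\phi(t_0)=0$ for all $t\in[t_0,T)$, which is the assertion. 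Equivalently, one can integrate the Harnack inequality $\tau^{\mu}G(z,\tau)\leq t^{\mu}G(z,t)$ over $\tau\in[t_0,t]$, use $\int_{t_0}^{t}\tau^{-\mu}\,d\tau=\frac{t^{1-\mu}-t_0^{1-\mu}}{1-\mu}$ together with $s(z,t_0)-s(z,t)=\int_{t_0}^{t}G(z,\tau)\,d\tau$, and observe that $t^{\mu}t_0^{1-\mu}\geq t_0$.

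I do not expect a genuine obstacle: the substantive work is the Harnack estimate already proved in Lemma \ref{lem: Harnack est}, and the only points needing a moment's care are the strict positivity of the speed $G$, the bound $\mu<1$ (needed both so that the coefficient $1-\mu$ is positive and so that the alternative integral converges), and the boundary case $t_0=0$, where the Harnack bound remains valid on $(0,T)$ and $\phi$ extends continuously to $t=0$.
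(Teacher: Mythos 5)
Your proof is correct and follows the same route as the paper: define the quantity $Q(t)$ (your $\phi$), note $Q(t_0)=0$, compute $Q'(t)=\mu\,G+(t-t_0)\,\partial_t G$ using $\partial_t s=-G$, and close with the Harnack estimate of Lemma \ref{lem: Harnack est}. The only difference is that the paper applies the Harnack bound \emph{after a time shift} to get $\partial_t G\geq-\frac{\mu}{t-t_0}G$ and hence $Q'\geq 0$ directly, whereas you use the unshifted bound $\partial_t G\geq-\frac{\mu}{t}G$ and pick up the harmless extra factor $t_0/t\geq 0$; this is a slight tidying that avoids re-invoking the Harnack lemma for the flow restarted at $t_0$, but it is the same idea, not a different route.
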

\begin{proof}
Denote the left-hand side of the claimed inequality by $Q(t)$. We will prove $\frac{d}{dt}Q(t)\geq 0.$ Calculating the time derivative of $Q(t)$ yields
\[\frac{d}{dt}Q(t)=\left(\frac{np}{(p+1)(n+1)}\right)s\left(\frac{\mathcal{K}}{s^{n+2}}\right)^{\frac{p}{p+n+1}}
+(t-t_0)\frac{\partial}{\partial t}\left(s\left(\frac{\mathcal{K}}{s^{n+2}}\right)^{\frac{p}{p+n+1}}\right).\]
Notice that by Lemma \ref{lem: Harnack est}, after a time shifting, we have
\[\partial_t\left(s\left(\frac{\mathcal{K}}{s^{n+2}}\right)^{\frac{p}{p+n+1}}\right)\geq -\frac{np}{(p+1)(n+1)(t-t_0)}\left(s\left(\frac{\mathcal{K}}{s^{n+2}}\right)^{\frac{p}{p+n+1}}\right)\]
for all $t>t_0.$
The proof is complete; at the time $t=t_0$ we have $Q(t_0)=0.$
\end{proof}
Having Lemma \ref{lem: app} in hand, we modify Andrews-McCoy's argument from \cite[Section 12]{BM} to obtain a lower bound on the centro-affine curvature under the $p$-flow.
\begin{remark}
To simplify the notation, we write $\left(\frac{\mathcal{K}}{s^{n+2}}\right)(z,L)$ for the centro-affine curvature of the convex body $L$ at $z\in\mathbb{S}^n.$
\end{remark}
We recall the following property of the centro-affine curvature.
\begin{remark}\label{re: affin rem} For every $A\in SL(n+1)$ and $K\in\mathcal{F}^{n+1}_e$, we have
$$\min_{z\in\mathbb{S}^n}\frac{\mathcal{K}}{s^{n+2}}(z,K)=\min_{z\in\mathbb{S}^n}\frac{\mathcal{K}}{s^{n+2}}(z,AK)~\&~\max_{z\in\mathbb{S}^n}\frac{\mathcal{K}}{s^{n+2}}(z,K)=\max_{z\in\mathbb{S}^n}\frac{\mathcal{K}}{s^{n+2}}(z,AK).$$
\end{remark}
In the remainder of the present text, we set $\alpha:=-1+\frac{2(n+1)p}{p+n+1}$; $\alpha$ is the homogeneity degree of the speed of the $p$-flow.
\begin{lemma}[Lower bound on the centro-affine curvature]\label{lem: lower bound on the speed}
Let $K_0$ be a convex body whose Mahler volume is $\varepsilon$-pinched and the assumption (\ref{e: pinching}) is satisfied. Let $\{K_t\}_{[0,T)}$  be the smooth, strictly convex solution of (\ref{e: p flow ev of s}). Then there exist a constant $C>0$ and a time $t_{\ast}<T$, such that for each $t\geq t_{\ast}$ we have
\[\left(\frac{\mathcal{K}}{s^{n+2}}\right)^{\frac{p}{p+n+1}}(z,t)\geq \frac{C}{T-t}.\]
\end{lemma}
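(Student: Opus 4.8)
The plan is to bound $\min_{z}(\mathcal{K}/s^{n+2})^{p/(p+n+1)}(z,t)$ from below (a lower bound valid for all $z$ is the same thing), exploiting that by Remark \ref{re: affin rem} this minimum is unchanged under $SL(n+1)$, so one may prove the estimate for a well-chosen linear image of $K_t$ and then transport it back. The subtlety is that the normalizing maps of Corollary \ref{cor: pinching ratio} depend on time, whereas Lemma \ref{lem: app} is not $SL(n+1)$-invariant; I would get around this by freezing the frame at an auxiliary earlier time $t_0$ and propagating the pinching forward with the explicit evolution of concentric balls.

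Fix $t\ge t_*:=T/2$ and set $t_0:=2t-T\in[0,t)$, so $t-t_0=T-t$. By Corollary \ref{cor: pinching ratio} choose $A\in SL(n+1)$ with $r_+(AK_{t_0})/r_-(AK_{t_0})\le\delta$ and put $\hat K_\tau:=AK_\tau$; since the $p$-flow is $SL(n+1)$-invariant, $\{\hat K_\tau\}_{[0,T)}$ is again a smooth, strictly convex solution of (\ref{e: p flow ev of s}) with $V(\hat K_\tau)=V(K_\tau)$, hence with the same maximal time $T$. With $\rho_0:=r_-(\hat K_{t_0})$, origin-symmetry gives $B_{\rho_0}\subseteq\hat K_{t_0}\subseteq B_{\delta\rho_0}$, the three balls concentric at the origin.

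Next I would invoke the containment principle together with the elementary fact that a ball of radius $r$ centered at the origin evolves under (\ref{e: p flow ev of s}) into the concentric ball of radius $r(\tau)$ with $r(\tau)^{\alpha+1}=r^{\alpha+1}-(\alpha+1)\tau$ (indeed $s\equiv r$ and $\mathcal{K}=r^{-n}$ give speed $r^{-\alpha}$), collapsing at time $r^{\alpha+1}/(\alpha+1)$. Comparing $\hat K_{t_0}$ with its inscribed ball yields $\hat K_\tau\supseteq B_{\rho(\tau)}$ with $\rho(\tau)^{\alpha+1}=\rho_0^{\alpha+1}-(\alpha+1)(\tau-t_0)$ while the right-hand side is positive; since $V(\hat K_\tau)\to0$ by Theorem \ref{theorem: zero volume}, this inscribed ball must collapse no later than $T$, i.e.\ $\rho_0^{\alpha+1}\le(\alpha+1)(T-t_0)=2(\alpha+1)(T-t)$. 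Comparing with the circumscribed ball gives $\hat K_\tau\subseteq B_{R(\tau)}$ with $R(\tau)^{\alpha+1}=(\delta\rho_0)^{\alpha+1}-(\alpha+1)(\tau-t_0)$, valid on all of $[t_0,T)$ (otherwise $V(\hat K_\tau)$ would vanish before $T$). Evaluating at $\tau=t$ and using $t-t_0=T-t$ with the bound on $\rho_0$,
\[
R(t)^{\alpha+1}=\delta^{\alpha+1}\rho_0^{\alpha+1}-(\alpha+1)(T-t)\ \le\ \left(\delta^{\alpha+1}-\tfrac12\right)\rho_0^{\alpha+1}=\mu^{\alpha+1}\rho_0^{\alpha+1},
\]
where $\mu:=(\delta^{\alpha+1}-\tfrac12)^{1/(\alpha+1)}$, and $0<\mu<1$, the upper bound being precisely assumption (\ref{e: pinching}). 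Thus $s(z,\hat K_t)\le R(t)\le\mu\rho_0$ and $s(z,\hat K_{t_0})\ge\rho_0$ for every $z\in\mathbb{S}^n$, so that $s(z,\hat K_{t_0})-s(z,\hat K_t)\ge(1-\mu)\rho_0\ge\frac{1-\mu}{\mu}\,s(z,\hat K_t)$.

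Finally, I would apply Lemma \ref{lem: app} to $\{\hat K_\tau\}$ with base time $t_0$ and time $t$. Since the speed in (\ref{e: p flow ev of s}) is $s(\mathcal{K}/s^{n+2})^{p/(p+n+1)}$, dividing the resulting inequality by $(t-t_0)\,s(z,\hat K_t)=(T-t)\,s(z,\hat K_t)$ and inserting the last bound gives, for all $z$,
\[
\left(\frac{\mathcal{K}}{s^{n+2}}\right)^{\frac{p}{p+n+1}}(z,\hat K_t)\ \ge\ \frac{1}{T-t}\left(1-\frac{np}{(p+1)(n+1)}\right)\frac{1-\mu}{\mu}\ =:\ \frac{C}{T-t},
\]
and $C>0$ since $1-\frac{np}{(p+1)(n+1)}=\frac{p+n+1}{(p+1)(n+1)}>0$ and $1-\mu>0$. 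Taking the minimum over $z$ and using Remark \ref{re: affin rem} (with $\hat K_t=AK_t$) transfers the bound to $K_t$. I expect the \emph{main obstacle} to be exactly the $SL(n+1)$-equivariance issue addressed above: one must run Lemma \ref{lem: app} in a single fixed frame yet still profit from the pinching, and the device of freezing the frame at $t_0$ works only because the time gap $t-t_0=T-t$ is comparable to the remaining lifespan of $\hat K_{t_0}$ — which is what forces the ratio $\mu$ in assumption (\ref{e: pinching}) strictly below $1$; everything else is routine bookkeeping with the explicit ball solution.
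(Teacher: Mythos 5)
Your argument is correct and relies on exactly the same ingredients as the paper's proof: Corollary \ref{cor: pinching ratio} to pick a normalizing $A\in SL(n+1)$, the containment principle with the explicit concentric ball solutions to bound $\rho_0^{1+\alpha}\le 2(1+\alpha)(T-t)$ and hence $R(t)\le\bigl(\delta^{1+\alpha}-\tfrac12\bigr)^{1/(1+\alpha)}\rho_0$, assumption (\ref{e: pinching}) to make this ratio strictly less than one, Lemma \ref{lem: app} to convert the drop in support function into a lower bound on the speed, and Remark \ref{re: affin rem} to transfer the bound back to $K_t$. The only (minor, and arguably cleaner) organizational difference is that the paper fixes a base time $\tau$, steps forward to $\tau^*=\tau+\frac{r_-(A_\tau K_\tau)^{1+\alpha}}{2(1+\alpha)}$, and closes with an intermediate value theorem argument to show that every $t\ge t_*$ arises as some $\tau^*$, whereas you parametrize directly by the target time $t$ and step backward to $t_0=2t-T$, which makes the surjectivity step unnecessary.
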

\begin{proof}
By Corollary \ref{cor: pinching ratio} for each $\tau\geq 0$ there exists a special linear transformation $A_{\tau}$, such that $\frac{r_{+}(A_{\tau}K_\tau)}{r_{-}(A_{\tau}K_\tau)}\leq \delta.$ Fix a $\tau\geq 0.$ Since $A_{\tau}K_{\tau}$ is origin-symmetric, the center of the maximal ball encompassed by $A_{\tau}K_{\tau}$ and the center of the minimal ball enclosing $A_{\tau}K_{\tau}$ are both located at the origin. Let $B_{r(t)}$ and $B_{R(t)}$ be solutions to the $p$-flow, respectively starting at $B_{r_{-}(A_{\tau}K_{\tau})}$ and $B_{\delta r_{-}(A_{\tau}K_{\tau})}$. The radii $R(t)$ and $r(t)$ are given by
\begin{equation}\label{eq: exp R}
R(t)=\left[(\delta r_{-}(A_{\tau}K_{\tau})^{1+\alpha}-(1+\alpha)(t-\tau)\right]^{\frac{1}{1+\alpha}}
\end{equation}
and
\[r(t)=\left[(r_{-}(A_{\tau}K_{\tau})^{1+\alpha}-(1+\alpha)(t-\tau)\right]^{\frac{1}{1+\alpha}}.\]
Notice that by the containment principle $B_{r(t)} \subseteq A_{\tau}K_t\subseteq B_{R(t)}$ for all $\tau \leq t\leq \tau+\frac{r_{-}(A_{\tau}K_{\tau})^{1+\alpha}}{1+\alpha},$ so we must have $T\geq\tau+\frac{r_{-}(A_{\tau}K_{\tau})^{1+\alpha}}{1+\alpha}.$ Take $\tau^{\ast}:=\tau+\frac{r_{-}(A_{\tau}K_{\tau})^{1+\alpha}}{2(1+\alpha)}$ and an arbitrary $z\in\mathbb{S}^n$. Set $\eta:=\left(1-\frac{np}{(p+1)(n+1)}\right)^{-1}>0.$ By Lemma \ref{lem: app} and equation (\ref{eq: exp R}) we obtain
\begin{align*}
\eta\left[\delta^{1+\alpha}-0.5\right]^{\frac{1}{1+\alpha}}&r_{-}(A_{\tau}K_{\tau})\left(\frac{\mathcal{K}}{s^{n+2}}\right)^{\frac{p}{p+n+1}}(z,A_{\tau}K_{\tau^{\ast}})\\
&=\eta R(\tau^{\ast})\left(\frac{\mathcal{K}}{s^{n+2}}\right)^{\frac{p}{p+n+1}}(z,A_{\tau}K_{\tau^{\ast}})\\
&\geq\eta\left(s\left(\frac{\mathcal{K}}{s^{n+2}}\right)^{\frac{p}{p+n+1}}\right)(z,A_{\tau}K_{\tau^{\ast}})\\
&\geq \frac{s_{A_{\tau}K_{\tau}}(z,\tau)-s_{A_{\tau}K_{\tau^{\ast}}}(z,\tau^{\ast})}{\tau^{\ast}-\tau}\\
&\geq \frac{2(r_{-}(A_{\tau}K_{\tau})-R(\tau^{\ast}))}{r_{-}(A_{\tau}K_{\tau})^{1+\alpha}}\\
&=\frac{2\left(1-\left[\delta^{1+\alpha}-0.5\right]^{\frac{1}{1+\alpha}}\right)}{r_{-}(A_{\tau}K_{\tau})^{\alpha}}.
\end{align*}
Therefore, we have
\[\left(\frac{\mathcal{K}}{s^{n+2}}\right)^{\frac{p}{p+n+1}}(z,A_{\tau}K_{\tau^{\ast}})\geq \frac{2(1+\alpha)C}{r_{-}(A_{\tau}K_{\tau})^{1+\alpha}},\]
for some positive constant $C.$
Recall that
\[T\geq\tau+\frac{r_{-}(A_{\tau}K_{\tau})^{1+\alpha}}{1+\alpha}=\tau^{\ast}+\frac{r_{-}(A_{\tau}K_{\tau})^{1+\alpha}}{2(1+\alpha)}.\]
 Thus, by Remark \ref{re: affin rem} we conclude that $K_{\tau^{\ast}}$ satisfies
\[\left(\frac{\mathcal{K}}{s^{n+2}}\right)^{\frac{p}{p+n+1}}(z,\tau^{\ast})\geq\frac{2(1+\alpha)C}{r_{-}(A_{\tau}K_{\tau})^{1+\alpha}}
\geq\frac{C}{T-\tau^{\ast}}.\]

To finish the proof, it suffices to show each $t\geq t_{\ast}:=\frac{r_{-}(A_{0}K_{0})^{1+\alpha}}{2(1+\alpha)}$
can be expressed as $t=\tau+\frac{r_{-}(A_{\tau}K_{\tau})^{1+\alpha}}{2(1+\alpha)}$ for a $\tau\geq 0:$
Define the function
$f$ on the time interval $[t_{\ast},T)$
by
\[f(\tau)=\tau+\frac{r_{-}(A_{\tau}K_{\tau})^{1+\alpha}}{2(1+\alpha)}-t.\]
Recall from Theorem \ref{theorem: zero volume} that $\lim\limits_{t\to T}V(A_tK_t)=\lim\limits_{t\to T}V(K_t)=0$. Hence, we must have $\lim\limits_{t\to T}r_{-}(A_tK_t)=0 .$ This implies that $\lim\limits_{\tau\to T}f(\tau)=T-\tau>0.$ On the other hand we have $f(0)\leq 0$. As $f$ is continuous, we conclude that there exists a $\tau$ such that $f(\tau)=0.$ Our argument to verify the lemma is now complete.
\end{proof}
\begin{remark}
For each $t\in[0,T)$ by the containment principle we have
\begin{equation}\label{e: controling the radii}
\frac{r_{-}(A_tK_t)^{1+\alpha}}{1+\alpha}\leq T-t\leq \frac{r_{+}(A_tK_t)^{1+\alpha}}{1+\alpha}\leq \frac{(\delta r_{-}(A_tK_t))^{1+\alpha}}{1+\alpha}.
\end{equation}
\end{remark}
\begin{lemma}[Upper bound on the centro-affine curvature]\label{lem: upper bound on the speed}
Let $K_0$ be a convex body whose Mahler volume is $\varepsilon$-pinched and the assumption (\ref{e: pinching}) is satisfied. Let $\{K_t\}_{[0,T)}$  be the smooth, strictly convex solution of (\ref{e: p flow ev of s}). Then there exists a constant $C'>0,$ such that on the time interval $[T/2,T)$ we have
\[\left(\frac{\mathcal{K}}{s^{n+2}}\right)^{\frac{p}{p+n+1}}(z,t)\leq \frac{C'}{T-t}.\]
\end{lemma}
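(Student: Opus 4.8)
The plan is to combine the $SL(n+1)$-invariance of the $p$-flow with a parabolic rescaling that normalises the size of the (pinched) solution, so that the Tso-type estimate of Lemma \ref{lem: upper G} --- whose constants depend only on bounds for the in- and circumradii --- applies with those radii, and hence those constants, controlled purely in terms of $n$, $p$ and the pinching constant $\delta$. First I would fix $t\in[T/2,T)$, set $\tau:=2t-T\in[0,t)$ (so $T-\tau=2(T-t)$), and use Corollary \ref{cor: pinching ratio} to choose $A_{\tau}\in SL(n+1)$ with $r_{+}(A_{\tau}K_{\tau})/r_{-}(A_{\tau}K_{\tau})\leq\delta$; writing $\rho:=r_{-}(A_{\tau}K_{\tau})$, origin-symmetry gives $B_{1}\subseteq\frac{1}{\rho}A_{\tau}K_{\tau}\subseteq B_{\delta}$. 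Under $L\mapsto\lambda L$ one has $s\mapsto\lambda s$, $\mathcal{K}\mapsto\lambda^{-n}\mathcal{K}$, hence $\mathcal{K}/s^{n+2}\mapsto\lambda^{-2(n+1)}\mathcal{K}/s^{n+2}$, and since $\tfrac{2(n+1)p}{p+n+1}=1+\alpha$ the $p$-flow is covariant under $(L_{\cdot})\mapsto(\lambda L_{\lambda^{-(1+\alpha)}\cdot})$; together with $SL(n+1)$-invariance this shows that
\[\hat{K}_{s}:=\frac{1}{\rho}\,A_{\tau}K_{\tau+\rho^{1+\alpha}s}\]
is again a smooth, strictly convex solution of (\ref{e: p flow ev of s}) with $B_{1}\subseteq\hat{K}_{0}\subseteq B_{\delta}$.

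By (\ref{e: controling the radii}) applied at time $\tau$, $\{\hat{K}_{s}\}$ exists on $[0,\hat{T})$ with $\frac{1}{1+\alpha}\leq\hat{T}\leq\frac{\delta^{1+\alpha}}{1+\alpha}$, and the original time $t$ corresponds to the rescaled time $\hat{s}:=\rho^{-(1+\alpha)}(t-\tau)=\hat{T}/2$. Since assumption (\ref{e: pinching}) forces $\delta^{1+\alpha}<1.5$, we obtain $\frac{1}{2(1+\alpha)}\leq\hat{s}<\frac{3}{4(1+\alpha)}$, so $\hat{s}$ is bounded below away from $0$ and lies strictly before the instant $\frac{1}{1+\alpha}$ at which the inner comparison ball collapses; comparing $\hat{K}_{s}$ with the concentric ball solutions issuing from $B_{1}$ and $B_{\delta}$, the containment principle then yields, for every $s\in[0,\hat{s}]$, inradius at least $(1-(1+\alpha)\hat{s})^{1/(1+\alpha)}\geq 4^{-1/(1+\alpha)}=:R_{-}$ and circumradius at most $\delta=:R_{+}$.

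Next I would apply Lemma \ref{lem: upper G} to $\{\hat{K}_{s}\}_{[0,\hat{s}]}$ at the time $\hat{s}$: since $\hat{s}\geq\frac{1}{2(1+\alpha)}$, the $t^{-np/((n+1)(p+1))}$ term is harmless, so the Gauss curvature of $\hat{K}_{\hat{s}}$ is pointwise bounded by a constant depending only on $n,p,R_{-},R_{+}$, i.e.\ only on $n,p,\delta$; together with $s_{\hat{K}_{\hat{s}}}\geq R_{-}$ this gives $\bigl(\frac{\mathcal{K}}{s^{n+2}}\bigr)(z,\hat{K}_{\hat{s}})\leq\bar{C}_{0}(n,p,\delta)$ for every $z\in\mathbb{S}^{n}$. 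Now I would pass back to $K_{t}$: by Remark \ref{re: affin rem} the \emph{maximum} over $z$ of the centro-affine curvature is unchanged under $A_{\tau}$, while under $L\mapsto\lambda L$ it scales pointwise by $\lambda^{-2(n+1)}$, and $A_{\tau}K_{t}=\rho\,\hat{K}_{\hat{s}}$, so
\[\max_{z\in\mathbb{S}^{n}}\left(\frac{\mathcal{K}}{s^{n+2}}\right)(z,t)=\rho^{-2(n+1)}\max_{z\in\mathbb{S}^{n}}\left(\frac{\mathcal{K}}{s^{n+2}}\right)(z,\hat{K}_{\hat{s}})\leq\rho^{-2(n+1)}\bar{C}_{0}.\]
Raising to the power $\tfrac{p}{p+n+1}$ and using $\tfrac{2(n+1)p}{p+n+1}=1+\alpha$, this bounds $\bigl(\frac{\mathcal{K}}{s^{n+2}}\bigr)^{\frac{p}{p+n+1}}(z,t)$ by $\bar{C}\,\rho^{-(1+\alpha)}$ for all $z$, where $\bar{C}=\bar{C}_{0}^{\,p/(p+n+1)}$. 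Finally, (\ref{e: controling the radii}) at time $\tau$ gives $2(T-t)=T-\tau\leq\frac{(\delta\rho)^{1+\alpha}}{1+\alpha}$, hence $\rho^{-(1+\alpha)}\leq\frac{\delta^{1+\alpha}}{2(1+\alpha)(T-t)}$, and the asserted bound follows with $C'=\frac{\delta^{1+\alpha}\bar{C}}{2(1+\alpha)}$.

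The only genuinely delicate point, I expect, is the one in the second paragraph: after the parabolic rescaling one must be certain that the relevant time slice $\hat{s}=\hat{T}/2$ stays uniformly inside the existence interval of $\{\hat{K}_{s}\}$ and, crucially, strictly before the collapse time $\frac{1}{1+\alpha}$ of the inner comparison ball, so that the in- and circumradii of $\hat{K}_{s}$ along $[0,\hat{s}]$ --- and with them the constants coming out of Lemma \ref{lem: upper G} --- are bounded independently of $t$. This is exactly what the smallness hypothesis (\ref{e: pinching}) secures, via its consequence $\delta^{1+\alpha}<1.5$; everything else is bookkeeping of the scaling exponents together with the Tso argument already carried out in Lemma \ref{lem: upper G}.
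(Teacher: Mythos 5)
Your proof is correct and follows essentially the same route as the paper: rescale parabolically so that Corollary \ref{cor: pinching ratio} and (\ref{e: controling the radii}) give uniform in- and circumradius bounds on the normalized solution, apply the Tso-type estimate of Lemma \ref{lem: upper G} away from the initial time, and then undo the scaling using (\ref{e: controling the radii}) to convert the resulting constant into a bound of the form $C'/(T-t)$. The only cosmetic difference is that you normalise by $\rho=r_{-}(A_{\tau}K_{\tau})$ and run the rescaled flow on $[0,\hat{s}]$, whereas the paper normalises by $(T-t^{\ast})^{1/(1+\alpha)}$ and parameterises the rescaled flow on $[-1,0]$; by (\ref{e: controling the radii}) these two normalisations are comparable up to the pinching constant $\delta$, so the two arguments coincide.
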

\begin{proof}
Fix $t^{\ast}\in [T/2,T).$ Therefore,
$\tilde{K}^{t^{\ast}}_t=\frac{1}{(T-t^{\ast})^{\frac{1}{1+\alpha}}}A_{2t^{\ast}-T}K_{t^{\ast}+(T-t^{\ast})t}$
is a solution of (\ref{e: p flow ev of s}) on the time interval $[-1,0].$ By inequalities (\ref{e: controling the radii}), at the time $t=-1$
\[r_{-}(\tilde{K}^{t^{\ast}}_{-1})=\frac{r_{-}(A_{2t^{\ast}-T}K_{2t^{\ast}-T})}{(T-t^{\ast})^{\frac{1}{1+\alpha}}}\geq \frac{(2(1+\alpha))^{\frac{1}{1+\alpha}}}{\delta}\] and
\[r_{+}(\tilde{K}^{t^{\ast}}_{-1})=\frac{r_{+}(A_{2t^{\ast}-T}K_{2t^{\ast}-T})}{(T-t^{\ast})^{\frac{1}{1+\alpha}}}\leq \delta(2(1+\alpha))^{\frac{1}{1+\alpha}}.\]
By our remark after assumption (\ref{e: pinching}) we know that $\delta<1.5^{\frac{1}{1+\alpha}}$. Thus, by the containment principle for any time $t\in[-1,0]$ we get
\[r_{-}(\tilde{K}^{t^{\ast}}_{t})\geq \left((1+\alpha)\left(\frac{2}{\delta^{1+\alpha}}-1\right)\right)^{\frac{1}{1+\alpha}}\geq
\left(\frac{1+\alpha}{3}\right)^{\frac{1}{1+\alpha}},\]
and
\[r_{+}(\tilde{K}^{t^{\ast}}_{t})\leq \delta(2(1+\alpha))^{\frac{1}{1+\alpha}}< (3(1+\alpha))^{\frac{1}{1+\alpha}} .\]
This in turn implies, using Lemma \ref{lem: upper G}, that the centro-affine curvature of the convex body $\frac{1}{(T-t^{\ast})^{\frac{1}{1+\alpha}}}A_{2t^{\ast}-T}K_{t^{\ast}}=\tilde{K}^{t^{\ast}}_{0}$ is bounded by a positive constant $C'.$ Thus, the centro-affine curvature of $A_{2t^{\ast}-T}K_{t^{\ast}}$ and equivalently the centro-affine curvature of $K_{t^{\ast}}$ fulfils
\[\left(\frac{\mathcal{K}}{s^{n+2}}\right)^{\frac{p}{p+n+1}}(z,t^{\ast})\leq \frac{C'}{T-t^{\ast}}.\]
Since $t^{\ast}\in [T/2,T)$ is arbitrary and $C'$ is independent of $t^{\ast}$, the proof is complete.
\end{proof}
\section{Proof of Theorem \ref{thm: mainthm}}
Fix $t^{\ast}\in[\max\{3T/4,\frac{T+t_{\ast}}{2}\},T).$ We know
$\tilde{K}^{t^{\ast}}_t=\frac{1}{(T-t^{\ast})^{\frac{1}{1+\alpha}}}A_{2t^{\ast}-T}K_{t^{\ast}+(T-t^{\ast})t}$
is a solution of (\ref{e: p flow ev of s}) on the time interval $[-1,0]$ with
\[r_{-}(\tilde{K}^{t^{\ast}}_{t})\geq
\left(\frac{1+\alpha}{3}\right)^{\frac{1}{1+\alpha}},\]
and
\[r_{+}(\tilde{K}^{t^{\ast}}_{t})< (3(1+\alpha))^{\frac{1}{1+\alpha}} .\]
Since $2t^{\ast}-T\geq \max\{T/2,t_{\ast}\}$, by Lemmas \ref{lem: lower bound on the speed} and \ref{lem: upper bound on the speed} we get
\[\frac{C}{2(T-t^{\ast})}\leq \left(\frac{\mathcal{K}}{s^{n+2}}\right)^{\frac{p}{p+n+1}}(z,2t^{\ast}-T)\leq \frac{C'}{2(T-t^{\ast})}.\]
Hence, as $\alpha+1=\frac{2(n+1)p}{p+n+1}$ we conclude that the centro-affine curvature of $\tilde{K}^{t^{\ast}}_{-1}$ also satisfies
\[\frac{C}{2}\leq \left(\frac{\mathcal{K}}{s^{n+2}}\right)^{\frac{p}{p+n+1}}(\cdot,\tilde{K}^{t^{\ast}}_{-1})\leq \frac{C'}{2}.\]
To prove the main theorem, we recall two basic observations contained in Lemmas \ref{lem: lower G1} and \ref{lem: upper G1}:
\begin{enumerate}
  \item The minimum of the speed, $\min\limits_{\mathbb{S}^{n}} \left(s\left(\frac{\mathcal{K}}{s^{n+2}}\right)^{\frac{p}{p+n+1}}\right)$, is non-decreasing in time.
  \item The speed remains bounded from above as long as the inradius has a lower bound. Furthermore, the upper bound on the speed depends only on the speed and the circumradius of the initial body, and the lower bound on the inradii of evolving convex bodies.
\end{enumerate}
Using observations (1) and (2) we conclude that each $\tilde{K}^{t^{\ast}}_t$ for $t\in[-1,0]$ fulfils
\[C_1\leq s\left(\frac{\mathcal{K}}{s^{n+2}}\right)^{\frac{p}{p+n+1}}(z,t)\leq C_2,\]
for constants $C_1$ and $C_2$ independent of $t^{\ast}.$ Indeed, these constants are independent of $t^{\ast}$ as they only depend only on $C$, $C'$, and $\alpha.$
Consequently, for $t\in[-1,0],$ each $\tilde{K}^{t^{\ast}}_t$ satisfies $C_3\leq S_n(z,t)\leq C_4$ for some constants $C_3$ and $C_4$ independent of $t^{\ast}.$ Now Lemma \ref{lem: lower P} implies that there is a constant $C_5$ independent of $t^{\ast}$ such that each $\tilde{K}^{t^{\ast}}_t$ for $t\in[-1/2,0]$ fulfils $\kappa_i\geq C_5.$ Since $S_n\geq C_3$ by Remark \ref{rem: rem}, we can find a constant $C_6$ independent of $t^{\ast}$ such that $C_5\leq \kappa_i\leq C_6$ for each convex body $\tilde{K}^{t^{\ast}}_t$ and $\forall t\in[-1/2,0].$ Therefore, by \cite{Krylov-Safonov,Krylov} there are uniform bounds on all higher derivatives of the curvature of $\tilde{K}^{t^{\ast}}_t$ for $t\in[-1/2,0].$ In particular, $\tilde{K}^{t^{\ast}}_0=(T-t^{\ast})^{-\frac{1}{1+\alpha}}A_{2t^{\ast}-T}K_{t^{\ast}}$
has uniform $\mathcal{C}^{k}$ bounds independent of $t^{\ast}.$ Consequently, we can find a sequence of times $\{t_k\}_{k\in\mathbb{N}}$ such that as $\{t_k\}_k$ tends to $T$, the family of convex bodies $\left\{(T-t_k)^{-\frac{1}{1+\alpha}}A_{2t_k-T}K_{t_k}\right\}_k$ approaches in the $\mathcal{C}^{\infty}$ topology to a convex body $\tilde{K}_{T}.$ We now proceed to show that the limiting shape is the unit ball. To this end, we will start with a few facts on convex bodies.

A celebrated affine invariant quantity associated with $K$ is its affine surface area. More recently it was realized that the affine surface area belongs to the whole family of equi-affine invariant notions of surface areas. The homogeneous such surface areas are called $p$-affine surface areas and were defined by Lutwak \cite{Lutwak2} for $p>1$ in the context of the Firey-Brunn-Minkowski theory. For $p>1$ the $p$-affine surface area of a smooth, strictly convex body $K$ with the origin in its interior can be expressed as
\[
\Omega_p (K) =\int_{\mathbb{S}^{n}}\frac{s}{\mathcal{K}}\left(\frac{\mathcal{K}}{s^{n+2}}\right)^{\frac{p}{n+1+p}}
  d\mu_{\mathbb{S}^{n}}.
\]
A central inequality at the core of the centro-affine geometry is the $p$-affine isoperimetric inequality due to Lutwak \cite{Lutwak2} for $p> 1$, which is a generalization of the classical affine isoperimetric inequality: If $K$ has its centroid or Santal\'{o} point at the origin, then
\[
\frac{\Omega_p^{n+p+1}(K)}{ V^{n-p+1} (K)}\leq (n+1)^{n+p+1}\omega_{n+1}^{2p}.
\]
Equality holds only for ellipsoids centered at the origin.

The following result is an immediate corollary of the inequality given in \cite[Proposition 4.2]{S}.
\begin{theorem}\cite{S}
Let $\{K_t\}_{[0,T)}$ be a smooth, strictly convex solution of equation (\ref{e: p flow ev of s}). Then the $p$-affine isoperimetric ratio, $\frac{\Omega_{p}^{n+1+p}(K_t)}{V^{n+1-p}(K_t)}$, is non-decreasing along the $p$-flow. The monotonicity is strict unless $K_t$ is an ellipsoid centered at the origin.
\end{theorem}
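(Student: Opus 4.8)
The plan is to show that $(n+1+p)\log\Omega_p(K_t)-(n+1-p)\log V(K_t)$ is non-decreasing; its derivative has the same sign as $\tfrac{d}{dt}\big(\Omega_p^{n+1+p}(K_t)/V^{n+1-p}(K_t)\big)$, so the theorem follows, and the rigidity will emerge from inspecting when the derivative vanishes. Set $q:=\tfrac{p}{p+n+1}$, so the speed of the flow is $w:=s\big(\tfrac{\mathcal K}{s^{n+2}}\big)^q=s^{1-(n+2)q}S_n^{-q}$ with $\partial_t s=-w$, and, since $S_n=1/\mathcal K$, one has $\Omega_p(K_t)=\int_{\mathbb S^n}s^{1-(n+2)q}S_n^{1-q}\,d\mu$ and $(n+1)V(K_t)=\int_{\mathbb S^n}sS_n\,d\mu$. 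First I would dispatch the volume term: differentiating $(n+1)V=\int_{\mathbb S^n}sS_n\,d\mu$, inserting $\partial_tS_n=(\dot S_n)^{ij}(\bar\nabla_i\bar\nabla_j\partial_ts+\bar g_{ij}\partial_ts)$, integrating by parts with the classical divergence-free identity $\bar\nabla_i(\dot S_n)^{ij}=0$, and using Euler's relation $(\dot S_n)^{ij}\mathfrak r_{ij}=nS_n$, one obtains $\tfrac{d}{dt}V=\int_{\mathbb S^n}(\partial_ts)S_n\,d\mu=-\Omega_p$. So the theorem is equivalent to the differential inequality $\tfrac{d}{dt}\Omega_p\ge-\tfrac{n+1-p}{n+1+p}\cdot\tfrac{\Omega_p^2}{V}$, which is essentially \cite[Proposition~4.2]{S}.

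The next step is to compute $\tfrac{d}{dt}\Omega_p$. Differentiating under the integral sign, using $\partial_ts=-w$ together with the expression for $\partial_tS_n$, and integrating by parts twice (again via $\bar\nabla_i(\dot S_n)^{ij}=0$), one reaches
\[
\frac{d}{dt}\Omega_p=\big((n+2)q-1\big)\int_{\mathbb S^n}\frac{w^2S_n}{s}\,d\mu+(1-q)\int_{\mathbb S^n}\Big[(\dot S_n)^{ij}\bar\nabla_iw\,\bar\nabla_jw-w^2(\dot S_n)^{ij}\bar g_{ij}\Big]\,d\mu .
\]
The decisive algebraic step is to recombine the bracketed term with a suitable multiple of $\int\frac{w^2S_n}{s}\,d\mu$: substituting $\bar g_{ij}=s^{-1}(\mathfrak r_{ij}-\bar\nabla_i\bar\nabla_js)$, applying Euler's relation, and integrating by parts once more, one verifies the identity
\begin{multline*}
n\int_{\mathbb S^n}\frac{w^2S_n}{s}\,d\mu+\int_{\mathbb S^n}\Big[(\dot S_n)^{ij}\bar\nabla_iw\,\bar\nabla_jw-w^2(\dot S_n)^{ij}\bar g_{ij}\Big]\,d\mu\\
=\int_{\mathbb S^n}(\dot S_n)^{ij}\Big(\bar\nabla_iw-\tfrac{w}{s}\bar\nabla_is\Big)\Big(\bar\nabla_jw-\tfrac{w}{s}\bar\nabla_js\Big)\,d\mu=:G\ \ge\ 0,
\end{multline*}
where the nonnegativity holds because $[\mathfrak r_{ij}]>0$ forces $[(\dot S_n)^{ij}]>0$. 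Writing $\psi:=(\mathcal K/s^{n+2})^q=w/s$, one has $\bar\nabla_iw-\tfrac{w}{s}\bar\nabla_is=s\,\bar\nabla_i\psi$, so $G=\int_{\mathbb S^n}(\dot S_n)^{ij}s^2\bar\nabla_i\psi\,\bar\nabla_j\psi\,d\mu$; feeding this back and using $(n+2)q-1-n(1-q)=(n+1)(2q-1)$ gives the compact formula
\[
\frac{d}{dt}\Omega_p=(n+1)(2q-1)\int_{\mathbb S^n}s\,\psi^2S_n\,d\mu+(1-q)\,G .
\]

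Finally I would combine the two evolutions. With $I_0:=\int_{\mathbb S^n}sS_n\,d\mu=(n+1)V$, $I_1:=\int_{\mathbb S^n}s\psi S_n\,d\mu=\Omega_p$, and $I_2:=\int_{\mathbb S^n}s\psi^2S_n\,d\mu$, and the elementary relations $(n+1+p)(2q-1)=-(n+1-p)$ and $(n+1+p)(1-q)=n+1$, a short computation gives
\[
\frac{d}{dt}\log\!\Big(\frac{\Omega_p^{n+1+p}}{V^{n+1-p}}\Big)=\frac{n+1}{\Omega_p}\Big[\,G-(n+1-p)\Big(I_2-\frac{I_1^2}{I_0}\Big)\Big] .
\]
By Cauchy--Schwarz, $I_1^2\le I_0I_2$, so $I_2-I_1^2/I_0\ge0$; thus for $p\ge n+1$ the bracket is $\ge G\ge0$ and the monotonicity is immediate. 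For $1<p<n+1$ what remains is the weighted Poincaré--type inequality $G\ge(n+1-p)\big(I_2-I_1^2/I_0\big)$ on $\mathbb S^n$ (the constant $n+1-p$ being strictly below the optimal Poincaré constant $n$ of the round sphere), which is precisely \cite[Proposition~4.2]{S}; establishing this is the step I expect to be the main obstacle. For the rigidity: if the derivative above vanishes at some time, then $G=0$, and since $[(\dot S_n)^{ij}]>0$ and $s>0$ this forces $\bar\nabla\psi\equiv0$, i.e. the centro-affine curvature $\mathcal K/s^{n+2}$ is constant on $\mathbb S^n$ --- which, by the classical characterization, holds only for ellipsoids centered at the origin. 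Hence the monotonicity is strict unless $K_t$ is such an ellipsoid.
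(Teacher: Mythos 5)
The paper ``proves'' this theorem by citing Stancu's Proposition~4.2 and nothing more; there is no computation to compare against. Your proposal is a detailed derivation that reduces the claim to exactly that cited inequality, so in spirit you follow the same route but lay bare the machinery. I checked the key steps and they are correct: $\tfrac{d}{dt}V=-\Omega_p$ follows from integration by parts with $\bar\nabla_i(\dot S_n)^{ij}=0$ and Euler's identity $(\dot S_n)^{ij}\mathfrak r_{ij}=nS_n$; your formula for $\tfrac{d}{dt}\Omega_p$ is correct; the algebraic identity
\[
n\int_{\mathbb S^n}\frac{w^2S_n}{s}\,d\mu+\int_{\mathbb S^n}\Big[(\dot S_n)^{ij}\bar\nabla_iw\,\bar\nabla_jw-w^2(\dot S_n)^{ij}\bar g_{ij}\Big]\,d\mu=G
\]
holds after substituting $\bar g_{ij}=s^{-1}(\mathfrak r_{ij}-\bar\nabla_i\bar\nabla_js)$ and one further integration by parts; and the constant relations $(n+1+p)(2q-1)=-(n+1-p)$, $(n+1+p)(1-q)=n+1$ give precisely your final display. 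Your observation that Cauchy--Schwarz already closes the argument for $p\ge n+1$ is a genuine added value over the bare citation; the remaining weighted Poincar\'e-type inequality for $1<p<n+1$ is indeed what one must import from [S], and you flag this honestly.

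One gap in the rigidity step as you wrote it: for $1<p<n+1$, the vanishing of your final derivative gives equality $G=(n+1-p)\big(I_2-I_1^2/I_0\big)$, \emph{not} $G=0$. To conclude $\bar\nabla\psi\equiv0$ in this range one must analyze the equality case of the weighted Poincar\'e inequality from [Proposition~4.2] of \cite{S}, not merely the positivity of $[(\dot S_n)^{ij}]$ and $s$. (For $p\ge n+1$ your argument does force $G=0$, since then both summands in the bracket are nonnegative.) With that caveat, and granting the cited inequality, your reconstruction is faithful to the paper's route.
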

Consequently, monotonicity of the $p$-affine isoperimetric ratio and Theorem \ref{theorem: zero volume} with a similar argument as in \cite{Ivaki1}, implies that $\tilde{K}_T$ must be an ellipsoid. Therefore, we get
$\lim\limits_{t_k\to T}\frac{\Omega_{p}^{n+1+p}(\tilde{K}^{t_k}_0)}{V^{n+1-p}(\tilde{K}^{t_k}_0)}=(n+1)^{n+p+1}\omega_{n+1}^{2p},$
and again by monotonicity of the $p$-affine isoperimetric ratio
$\lim\limits_{t\to T}\frac{\Omega_{p}^{n+1+p}(\tilde{K}^{t}_0)}{V^{n+1-p}(\tilde{K}^{t}_0)}=(n+1)^{n+1+p}\omega_{n+1}^{2p}.$
By the equality case in the $p$-affine isoperimetric inequality, we infer that
\[\lim_{t\to T}\frac{1}{(T-t)^{\frac{1}{1+\alpha}}}A_{2t-T}K_{t}=B\]
sequentially in the $\mathcal{C}^{\infty}$ topology, modulo $GL(n+1)$. On the other hand, observe that by the containment principle
$\frac{r_{-}(AK_t)^{1+\alpha}}{1+\alpha}\leq T-t\leq \frac{r_{+}(AK_t)^{1+\alpha}}{1+\alpha}$
for all $A\in SL(n+1)$. In particular,
$\frac{r_{-}(A_{2t-T}K_t)^{1+\alpha}}{1+\alpha}\leq T-t\leq \frac{r_{+}(A_{2t-T}K_{t})^{1+\alpha}}{1+\alpha}.$
Therefore
\[r_{-}\left(\frac{1}{((1+\alpha)(T-t))^{\frac{1}{1+\alpha}}}A_{2t-T}K_{t}\right)\leq 1\leq r_{+}\left(\frac{1}{((1+\alpha)(T-t))^{\frac{1}{1+\alpha}}}A_{2t-T}K_{t}\right).\]
From these last inequalities, it follows, modulo $SL(n+1)$, that
\[\lim_{t\to T}\frac{1}{((1+\alpha)(T-t))^{\frac{1}{1+\alpha}}}A_{2t-T}K_{t}=B\]
sequentially in the $\mathcal{C}^{\infty}$ topology. The proof is complete.

\textbf{Acknowledgment:} I am indebted to the referees whose comments and suggestions have led to improvements of this article.


\begin{thebibliography}{30}
\bibitem{AS} Alessandroni, R., Sinestrari, C.: Evolution of hypersurfaces by powers of the scalar curvature, Ann. Sc. Norm. Super. Pisa Cl. Sci. 9, 541--571 (2010)
\bibitem{BA5} Andrews, B.: Harnack inequalities for evolving hypersrufaces, Math. Z. 217, 179--197 (1994)
\bibitem{BA4} Andrews, B.: Contraction of convex hypersurfaces by their affine normal, J. Differential Geom. 43, 207--230 (1996)
\bibitem{BA2} Andrews, B.: Gauss curvature flow: the fate of the rolling stones, Invent. Math. 138, no. 1, 151--161 (1999)
\bibitem{BA1} Andrews, B.: Motion of hypersurfaces by Gauss curvature, Pacific J. Math. 195, no. 1 (2000)
\bibitem{BA3} Andrews, B.: Pinching estimates and motion of hypersurfaces by curvature functions, J. Reine Angew. Math. 608, 17--33 (2007)
\bibitem{BX} Andrews, B., Chen, X.: Surfaces moving by powers of Gauss curvature, Pure Appl. Math. Q. 8, no. 4, 825-834 (2012)
\bibitem{BM} Andrews, B., McCoy, J.: Convex hypersurfaces with pinched principal curvatures and flow of convex hypersurfaces by high powers of curvature, Trans.  Amer. Math. Soc. 364, no. 7, 3427--3447 (2012)
\bibitem{BMZ} Andrews, B., McCoy, J., Zheng, Y.: Contracting convex hypersurfaces by curvature, Calc. Var. and Partial Differential Equations 47, no. 3-4, 611--665 (2013)
\bibitem{AST} Angenent, S., Sapiro, G., Tannenbaum, A.: On the heat equation for non-convex curves, J. Amer. Math. Soc. 11, no. 3, 601--634 (1998)
\bibitem{BB2} Ball, B., B\"{o}r\"{o}czky K.J.: Stability of some versions of the Pr\'{e}kopa-Leindler inequality, Monatsh. Math. 163, 1--14 (2011)
\bibitem{Bl} Blaschke, W.: \"{U}ber affine Geometrie I. Isoperimetrische Eigenschaften von Ellipse und Ellipsoid, Leipz. Ber. 68, 217--239 (1916)
\bibitem{CRS} Cabezas-Rivas, E., Sinestrari, C.: Volume-preserving flow by powers of the $m$-th mean curvature, Calc. Var. and Partial Differential Equations 38, 441--469 (2010)
\bibitem{chen} Chen, S.: Classifying convex compact ancient solutions to the affine curve shortening flow, J. Geom. Anal. (2013)   doi: \url{http://dx.doi.org/10.1007/s12220-013-9456-z}
\bibitem{Ch1} Chow. B.: Deforming convex hypersurfaces by the $n$-th root of the Gaussian curvature, J. Differential Geom. 22, no. 1, 117--138 (1985)
\bibitem{GL} Guan, P., Ni, L.: Entropy and a convergence theorem for Gauss curvature flow in high dimension, preprint 2013, available at \url{http://arxiv.org/abs/1306.0625}
\bibitem{Ivaki1} Ivaki, M.N.: Centro-affine curvature flows on centrally symmetric convex curves, Trans. Amer. Math. Soc. (2014)  doi: \url{http://dx.doi.org/10.1090/S0002-9947-2014-05928-X}
\bibitem{Ivakig} Ivaki, M.N.: Stability of the $p$-affine isoperimetric inequality, J. Geom. Anal. 24, no. 4, 1898--1911 (2014)
available at doi: \url{http://dx.doi.org/10.1007/s12220-013-9401-1}
\bibitem{Ivaki3} Ivaki, M.N.: The planar Busemann-Petty centroid inequality and its stability, (to appear) Trans. Amer. Math. Soc. (2014) available at \url{http://arxiv.org/abs/1312.4834v6}
\bibitem{Ivaki} Ivaki, M.N.: Centro-affine normal flows on curves: Harnack estimates and ancient solutions, Ann. Inst. H. Poincar\'{e} Anal. Non Lin\'{e}aire (2014) doi: \url{http://dx.doi.org/10.1016/j.anihpc.2014.07.001}
\bibitem{Ivaki2} Ivaki, M.N.: Stability of the Blaschke-Santal\'{o} inequality in the plane, Monatsh. Math. (2014) doi: \url{http://dx.doi.org/10.1007/s00605-014-0651-1}
\bibitem{Ivaki4} Ivaki, M.N.: A note on the Gauss curvature flow, preprint 2014, available at \url{http://arxiv.org/abs/1409.2629v2}
\bibitem{ivaki5} Ivaki, M.N.: Classification of compact convex ancient solutions of the planar affine normal flow, (to appear) J. Geom. Anal. (2014)
\bibitem{IS} Ivaki, M.N., Stancu A.: Volume preserving centro-affine normal flows, Comm. Anal. Geom. (2013) doi: \url{http://dx.doi.org/10.4310/CAG.2013.v21.n3.a9}
\bibitem{Krylov-Safonov} Krylov, N.V., Safonov. V.M.: A certain property of solutions of parabolic equations with measurable coefficients, Izvestiya Rossiiskoi Akademii Nauk. Seriya Matematicheskaya 44, no. 1 (1980): 161--175 (1980)
\bibitem{Krylov} Krylov, N.V.: Boundedly inhomogeneous elliptic and parabolic equations in domains, Izvestiya: Mathematics 20, no. 3, 459--492 (1983)
\bibitem{K} Krylov, N.V.: Nonlinear Elliptic and Parabolic Equations of the Second Order, D. Reidel Publishing Co., Dordrecht, (1987)
\bibitem{LT} Loftin, J., Tsui, M.P.: Ancient solutions of the affine normal flow, J. Differential Geom. 78, 113--162 (2008)
\bibitem{Lutwak2} Lutwak, E.: The Brunn-Minkowski-Fiery theory II: Affine and geominimal surface areas, Adv. Math. 118, 244--294 (1996)
\bibitem{ST4} Sapiro, G., Tannenbaum, A.: On affine plane curve evolution, J. Funct. Anal. 119, 79--120 (1994)
\bibitem{Os} Schn\"{u}rer, O.C.: Surfaces contracting with speed $|A|^2$, J. Differential Geom. 71, no. 3, 347--363 (2005)
\bibitem{Sch} Schulze, F.: Convexity estimates for flows by powers of the mean curvature, Ann. Sc. Norm. Super. Pisa Cl. Sci. 5, no. 2, 261--277 (2006)
\bibitem{Sm} Smoczyk, K.: Starshaped hypersurfaces and the mean curvature flow,  Manuscripta Math. 95, no. 2, 225--236 (1998)
\bibitem{S} Stancu, A.: Centro-affine invariants for smooth convex bodies, Int. Math. Res. Not. IMRN (2011) doi: \url{http://dx.doi.org/10.1093/imrn/rnr110}
\bibitem{S2} Stancu, A.: Flows by powers of centro-affine curvature, Geometric Partial Differential Equations proceedings CRM Series (2013) doi: \url{http://dx.doi.org/10.1007/978-88-7642-473-1_13}
\bibitem{Tsai} Tsai, H.D.: $C^{2,\alpha}$ estimate of a parabolic Monge-Amp\`{e}re Equation on $\mathbb{S}^n$, Proc. Amer. Math. Soc. 131, no. 10, 3067--3074 (2003)
\bibitem{Tso} Tso, K.: Deforming a hypersurface by its Gauss-Kronecker curvature, Comm. Pure Appl. Math. 38, 867--882 (1985)
\bibitem{WTL} Wu, C., Tian, D., Li, G.: Forced flows by powers of the $m$-th Mean curvature, Armen. J. Math. 3, 61--91 (2010)
\end{thebibliography}
\end{document}